\newtheorem{Theorem}{Theorem}[section]
\newtheorem{Lemma}[Theorem]{Lemma}
\newtheorem{Definition}[Theorem]{Definition}
\newtheorem{Corollary}[Theorem]{Corollary}
\newtheorem{Proposition}[Theorem]{Proposition}
\newtheorem{Remark}[Theorem]{Remark}
\newtheorem{Conjecture}[Theorem]{Conjecture}
\date{version of \today}
\title
[on cluster algebras via abstract pattern and two conjectures]
{Study on cluster algebras via abstract pattern and \\two conjectures on {\bf d}-vectors and {\bf g}-vectors}
\author{Peigen Cao $\;\;\;\;\;\;$ Fang Li $\;\;\;\;\;\;$}
\address{Peigen Cao
\newline Department
of Mathematics, Zhejiang University (Yuquan Campus), Hangzhou, Zhejiang
310027,  P.R.China}
\email{peigencao@126.com}
\address{Fang Li
\newline Department
of Mathematics, Zhejiang University (Yuquan Campus), Hangzhou, Zhejiang
310027, P.R.China}
\email{fangli@zju.edu.cn}
\begin{document}
\renewcommand{\thefootnote}{\alph{footnote}}

\renewcommand{\thefootnote}{\alph{footnote}}
\setcounter{footnote}{-1} \footnote{\emph{ Mathematics Subject
Classification(2010)}:  13F60, 05E40}
\renewcommand{\thefootnote}{\alph{footnote}}
\setcounter{footnote}{-1} \footnote{ \emph{Keywords}: cluster algebra, generalized Laurent phenomenon algebra, {\bf d}-vector, {\bf g}-vector.}

\begin{abstract}  We mainly introduce an abstract pattern to study cluster algebras.   Cluster algebras, generalized cluster algebras and Laurent phenomenon algebras are unified in the language of generalized Laurent phenomenon algebras (briefly, GLP algebras) from the perspective of Laurent phenomenon.

 In this general framework, we firstly prove that each positive and  {\bf d}-vector-positive GLP algebra has the proper  Laurent monomial  property and thus its cluster monomials are linearly independent.   Skew-symmetric cluster algebras are verified to be {\bf d}-vector-positive, which gives the affirmation of Conjecture \ref{conjd} in \cite{FZ3} in this case. And since the positivity of skew-symmetric cluster algebras is well-known,  the new proof is obtained for the linearly independence of cluster monomials of  skew-symmetric cluster algebras.

For a class of GLP algebras which are  {\em pointed}, {\bf g}-vectors of cluster monomials are defined. We verify  that for any positive GLP algebra pointed at $t_0$, the {\bf g}-vectors ${\bf g}_{1;t}^{t_0},\cdots, {\bf g}_{n;t}^{t_0}$  form a $\mathbb Z$-basis of $\mathbb Z^n$, and  different cluster monomials have different {\bf g}-vectors. As a direct application, we verify that Conjecture \ref{conjg} in \cite{FZ3} holds for skew-symmetrizable cluster algebras and  acyclic sign-skew-symmetric cluster algebras.

\end{abstract}

\maketitle
\bigskip

\section{introduction}

Cluster algebras were introduced by Fomin and Zelevinsky in \cite{FZ}. The motivation was to create a common framework for phenomena occurring in connection
with total positivity and canonical bases. A cluster algebra $\mathcal A$ of rank $n$ is a subalgebra of an ambient field $\mathcal F$ generated by certain combinatorially defined generators (called as {\em cluster variables}). One of the important features of cluster algebras is that they have Laurent phenomenon. Thus one can define the {\em {\bf d}-vectors} of cluster variables. Fomin and Zelevinsky conjectured that
\begin{Conjecture}\label{conjd}
(Conjecture 7.4 (i) in \cite{FZ3}: Positivity of {\bf d}-vectors) For any given a cluster $X_{t_0}$ of a cluster algebra $\mathcal A$, and any cluster variable  $x\notin X_{t_0}$,  the denominator vector (briefly, {\bf d}-vector) $d^{t_0}(x)$ of  $x$ with respect to $X_{t_0}$ is in $\mathbb N^n$.
\end{Conjecture}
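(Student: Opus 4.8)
The plan is to establish the conjecture in the skew-symmetric case, which is the generality this framework reaches; I leave the general (sign-skew-symmetric) case aside. Fix the initial seed at $t_0$ with cluster $X_{t_0}=\{x_{1;t_0},\dots,x_{n;t_0}\}$, and let $x\notin X_{t_0}$ be a cluster variable. By the Laurent phenomenon, $x=f/(x_{1;t_0}^{d_1}\cdots x_{n;t_0}^{d_n})$ with $f$ a polynomial not divisible by any $x_{i;t_0}$, and $d^{t_0}(x)=(d_1,\dots,d_n)$; the assertion is that each $d_i\ge 0$, i.e.\ that the cluster algebra is \emph{\textbf{d}-vector-positive} in the sense of the framework above. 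The strategy is to realize every cluster variable as a cluster character of a module and to identify its denominator vector with the dimension vector of that module, which is non-negative by construction.

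First I would attach to the skew-symmetric exchange matrix at $t_0$ a quiver $Q$ together with a nondegenerate potential $W$ (in the acyclic case $W=0$ suffices), and pass to the associated additive categorification: the cluster category, or the category of decorated representations of the Jacobian algebra $J(Q,W)$. Via the generalized Caldero--Chapoton (cluster character) map, I would record the bijection sending each reachable indecomposable rigid object $M$ to a non-initial cluster variable $X_M$, so that every $x\notin X_{t_0}$ has the form $X_M$.

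Next I would read off the denominator of $X_M$. The cluster-character formula expresses $X_M$ as a sum over submodule Grassmannians $\mathrm{Gr}_e(M)$ of Laurent monomials in the $x_{i;t_0}$, so that after clearing denominators the exponent of $x_{i;t_0}$ in the denominator is $(\underline{\dim}\,M)_i$. The crucial point, and the main obstacle, is to show this denominator survives without cancellation, i.e.\ that the monomial carrying the most negative exponent in each variable occurs with nonzero coefficient, equivalently that the governing Euler characteristic does not vanish. For acyclic $Q$ this follows from the explicit shape of the formula and properties of the Euler form; for general skew-symmetric $Q$ it rests on the finer theory of quivers with potential and the geometry of $\mathrm{Gr}_e(M)$, which is where the genuine difficulty lies. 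Granting this, the conclusion is immediate: $d^{t_0}(x)=\underline{\dim}\,M\in\mathbb N^n$.

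As an alternative, more combinatorial route I would instead try to verify the conjectural \textbf{d}-vector mutation rule
\[
d^{t_0}(x_{k;t'})=\max\Bigl(\sum_{b_{ik}>0}b_{ik}\,d^{t_0}(x_{i;t}),\ \sum_{b_{ik}<0}(-b_{ik})\,d^{t_0}(x_{i;t})\Bigr)-d^{t_0}(x_{k;t}),
\]
with $b_{ik}$ the entries of the exchange matrix at $t$ and the maximum taken componentwise, using the known Laurent positivity of skew-symmetric cluster algebras to exclude cancellation in the exchange relation, and then inducting on the distance from $t_0$ in the exchange graph (the base case being $x$ adjacent to $X_{t_0}$, whose \textbf{d}-vector is a standard basis vector). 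Here the difficulty does not disappear but migrates: one must both justify the rule and show its right-hand side remains in $\mathbb N^n$ at every step, which is again the heart of the matter.
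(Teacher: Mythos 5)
Your proposal is a plan rather than a proof: both routes you sketch stop precisely at the step that constitutes the entire difficulty, and you acknowledge this yourself (``this is where the genuine difficulty lies'', ``which is again the heart of the matter''). Worse, the first route rests on a premise that is false in the generality required. The identification $d^{t_0}(x)=\underline{\dim}\,M$ of denominator vectors with dimension vectors of the corresponding rigid objects is a theorem only when the seed at $t_0$ is acyclic --- this is exactly the ``weak version'' of Caldero and Keller \cite{CK} that the introduction cites as previously known --- and for non-acyclic seeds the denominator vector of a cluster variable is known to differ from the dimension vector of the associated module in explicit examples. So even if you resolved the non-cancellation issue for the Euler characteristics, the categorical route would at best recover the acyclic case and cannot reach arbitrary clusters $X_{t_0}$ of a skew-symmetric cluster algebra, which is the whole point of Theorem \ref{thmdgood}. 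The second route is circular for present purposes: the componentwise-max recurrence for $d$-vectors is itself a Fomin--Zelevinsky conjecture of comparable depth, and you give no indication of how to establish it or how to propagate nonnegativity through it.

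For comparison, the paper's proof of the skew-symmetric case proceeds quite differently. It inducts on the distance $dist(z,X_{t^{\prime}})$ in the $n$-regular tree and uses the fact (Reading--Stella) that a single mutation in direction $k$ changes only the $k$-th entry of the $d$-vector, so only one coordinate needs to be controlled at each inductive step. That control is extracted from the structural decompositions $z=P_t+Q_t$ and $z=P_{1;t}+P_{2;t}+Q_{1;t}+Q_{2;t}$ with nonnegative coefficients established by Lee and Schiffler \cite{LS} in their proof of positivity (Theorem \ref{Lee} and Lemma \ref{lempq}), together with a case analysis on the maximal rank-two mutation subsequence at the end of the path, rank-two results from \cite{FZ}, and exchange-graph facts from \cite{GSV} to exclude the bad configurations (Lemma \ref{lempositive}). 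Neither of your sketches supplies a substitute for that machinery, so the argument as proposed has a genuine gap at its core.
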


We say a cluster algebra $\mathcal A$ to be {\bf d-vector-positive} if Conjecture \ref{conjd} holds for  $\mathcal A$ (see Definition \ref{defd}).
It is known that cluster algebras are  {\bf d}-vector-positive in the following cases:
 cluster algebras of rank $2$ (Theorem 6.1 of \cite{FZ}),  cluster algebras arising from surfaces (see \cite{FST}), and  cluster algebras of finite type (see \cite{CCS,CP}).

  Caldero and  Keller proved a weak version of Conjecture \ref{conjd} in \cite{CK}, that is, for a skew-symmetric cluster algebra $\mathcal A$, if the seed $\Sigma(X_{t_0})$ at $t_0$ is acyclic, then the  {\bf d}-vector $d^{t_0}(x)$ of a cluster variable $x\notin X_{t_0}$ is in $\mathbb N^n$.

 In this paper, we will verify Conjecture \ref{conjd} for any skew-symmetric cluster algebras,
that is,  skew-symmetric cluster algebras are always {\bf d}-vector-positive (Theorem \ref{thmdgood}).

  The cluster monomial in a cluster algebra $\mathcal A$ is a monomial in cluster variables from the same cluster.  Cluster monomials are conjectured to be linearly independent over the coefficient ring $\mathbb {ZP}$. In \cite{CLF}, the authors prove that if a cluster algebra has {\em the  proper  Laurent monomial property} (see Definition \ref{deflaurent}), then its cluster monomials are  linearly independent. In \cite{CKLP}, the authors prove the skew-symmetric algebras are always having {\em the proper  Laurent monomial property}, by using the method of representation theory and thus they obtain the linear independence of cluster monomials for skew-symmetric cluster algebras.

The definitions of both generalized cluster algebras  and Laurent phenomenon algebras are the generalizations of  that  of cluster algebras from the perspective of exchange relations. They have the so-called {\em Laurent phenomenon} (see \cite{CS} and \cite{LP}). From \cite{CS,NT,CL}, we know that generalized cluster algebras and cluster algebras share many common properties. Lam and Pylyavskyy in \cite{LP} believed that some features of cluster algebras can be extended to Laurent phenomenon algebras.
 Inspired by these, we consider the fundamental characterization of cluster algebras as cluster patterns on $n$-regular tree with the Laurent phenomenon to introduce an abstract pattern, that is, the generalized Laurent phenomenon algebras (or say, GLP algebras) as a general framework in order to concentrate on those common  properties of such algebras having the Laurent phenomenon.

In this paper, we prove that if the positivity of  {\bf d}-vectors and the positivity of cluster variables hold for a GLP algebra $\mathcal A(M_T)$ (including skew-symmetric cluster algebras as the special case), then $\mathcal A(M_T)$ has the  proper  Laurent monomial property  and thus the cluster monomials of $\mathcal A(M_T)$ are linearly independent over the coefficient ring $\mathbb{ZP}$ (Theorem \ref{mainthm}).

  Cluster algebras with principal coefficients are very important research objects in the theory of cluster algebras.  The following statements are conjectured by Fomin and Zelevinsky in \cite{FZ3} for this kind of cluster algebras.

\begin{Conjecture}(Conjectre 7.10, \cite{FZ3})\label{conjg}
Let $\mathcal A$ be a  cluster algebras with principal coefficients. Then,

(i) Different cluster monomials have different {\em {\bf g}-vectors};

(ii) The {\em {\bf g}-vectors} ${\bf g}_{1;t}^{t_0},\cdots, {\bf g}_{n;t}^{t_0}$ form a $\mathbb Z$-basis of $\mathbb Z^n$.

\end{Conjecture}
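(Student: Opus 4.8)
The plan is to derive Conjecture~\ref{conjg} from the corresponding statement about positive GLP algebras pointed at $t_0$, so my first step is to realize a cluster algebra $\mathcal A$ with principal coefficients at $t_0$ as an object of this kind. Because $\mathcal A$ has principal coefficients it carries the usual $\mathbb Z^n$-grading, in which $\deg(x_{i;t_0})=\mathbf e_i$ and $\deg(y_j)$ is minus the $j$-th column of the exchange matrix; with respect to this grading every cluster variable $x_{i;t}$ is homogeneous and can be written as $x_{i;t}=\mathbf x^{\mathbf g_{i;t}^{t_0}}F_{i;t}(\hat y_1,\dots,\hat y_n)$, where each $\hat y_j$ has degree $0$ and each $F$-polynomial $F_{i;t}$ has constant term $1$. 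This is exactly the data witnessing that $\mathcal A$ is pointed at $t_0$, with $\mathbf g_{i;t}^{t_0}$ the pointing degree of $x_{i;t}$, so the only substantive hypothesis of the abstract theorem that remains to be checked is positivity, i.e. the Laurent positivity of cluster variables. This is known for skew-symmetrizable cluster algebras and, for acyclic sign-skew-symmetric ones, can be obtained by relating them to the skew-symmetrizable situation. Granting this, the whole problem reduces to proving the two assertions for an arbitrary pointed positive GLP algebra, which is the level at which I would work.

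Next I would record the additivity of g-vectors under multiplication: if $p$ and $q$ are pointed with pointing degrees $\mathbf g(p)$ and $\mathbf g(q)$, then $pq$ is again pointed and $\mathbf g(pq)=\mathbf g(p)+\mathbf g(q)$. Here positivity is essential: the two minimal (pointed) monomials of $p$ and $q$ multiply to a single monomial whose coefficient is a product of positive numbers, hence cannot cancel against anything and survives as the unique minimal term of $pq$. In particular every cluster monomial $\prod_i x_{i;t}^{a_i}$ has g-vector $\sum_i a_i\,\mathbf g_{i;t}^{t_0}$.

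For part (ii) I would induct on the distance of $t$ from $t_0$ in the tree $T$. At $t=t_0$ the g-vectors are the standard basis $\mathbf e_1,\dots,\mathbf e_n$. For the inductive step, mutation in direction $k$ fixes $x_{i;t}$ for $i\neq k$, so $\mathbf g_{i;t'}^{t_0}=\mathbf g_{i;t}^{t_0}$ for such $i$, while the exchange relation writes $x_{k;t}x_{k;t'}$ as a sum of pointed monomials in the variables of $X_t$ and the coefficients; taking the pointing degree of its minimal term and applying additivity yields $\mathbf g_{k;t'}^{t_0}=-\mathbf g_{k;t}^{t_0}+\mathbf h$, where $\mathbf h$ is a $\mathbb Z$-combination of $\{\mathbf g_{i;t}^{t_0}:i\neq k\}$. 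Consequently the g-vector matrix $G_{t'}$ is obtained from $G_t$ by right multiplication by a matrix that differs from the identity only in its $k$-th column and has determinant $-1$. Hence $G_t\in GL_n(\mathbb Z)$ for every $t$, and its columns $\mathbf g_{1;t}^{t_0},\dots,\mathbf g_{n;t}^{t_0}$ form a $\mathbb Z$-basis of $\mathbb Z^n$.

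For part (i), within a fixed cluster $X_t$ the additivity of the second paragraph together with (ii) makes $(a_1,\dots,a_n)\mapsto\sum_i a_i\,\mathbf g_{i;t}^{t_0}$ injective, so distinct cluster monomials supported on one cluster already have distinct g-vectors. The delicate case, which I expect to be the main obstacle, is two cluster monomials $m,m'$ from different clusters with $\mathbf g(m)=\mathbf g(m')=\mathbf g$: pointedness forces both to equal $\mathbf x^{\mathbf g}$ plus strictly larger terms, and positivity forces the coefficient of $\mathbf x^{\mathbf g}$ to be $1$ in each, so if $m\neq m'$ then in $m-m'$ this minimal term cancels. To turn this into a contradiction I would invoke the linear independence of cluster monomials (Theorem~\ref{mainthm}, available because the algebra is positive and, in the cluster setting, {\bf d}-vector-positive) together with the proper Laurent monomial property, using the pointed structure to compare the surviving terms. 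The two places where genuine work is needed are therefore the verification of positivity in the acyclic sign-skew-symmetric case and this cross-cluster separation, where the pointed leading term by itself does not distinguish $m$ from $m'$ and one must use positivity and linear independence in tandem.
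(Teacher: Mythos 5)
Your reduction to positive GLP algebras pointed at $t_0$ matches the paper's strategy, and your part (ii) reaches the right conclusion, though by a different and slightly shakier route: your mutation induction needs to identify which of the two terms of the exchange relation survives at $y=0$, which is essentially sign-coherence and is not free in this setting. The paper's Theorem \ref{detgthm} avoids any recursion: it expands each $x_{j;t_0}$ in pointed form with respect to an arbitrary cluster $X_t$ (using positivity to see that exactly one leading monomial survives the specialization $y_1=\cdots=y_m=0$), deduces $G_{t'}^{t_0}=G_t^{t_0}R_t^{t'}$ for all $t,t'$, and takes $t'=t_0$ to get $G_t^{t_0}R_t^{t_0}=I_n$, hence $\det G_t^{t_0}=\pm 1$ at once.

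The genuine gap is in part (i), exactly at the cross-cluster case you yourself flag as the main obstacle. Your plan is to invoke linear independence of cluster monomials via Theorem \ref{mainthm}, but that theorem requires {\bf d}-vector-positivity, which the paper proves only for skew-symmetric cluster algebras (Theorem \ref{thmdgood}); it is not available for skew-symmetrizable or acyclic sign-skew-symmetric cluster algebras --- the cases the statement is meant to cover --- nor for an abstract positive pointed GLP algebra. Moreover, even granting linear independence, knowing that $m-m'\neq 0$ while its leading terms cancel does not by itself yield a contradiction, and you do not say how it would. The paper's Theorem \ref{thmmonomial} closes this case using only positivity and pointedness: writing ${\bf x}_{t_1}^{\bf a}={\bf x}_{t_2}^{\bf b}\bigl(1+\sum c_{\bf v}{\bf y}^{\bf v}{\bf x}_{t_2}^{\bf u}\bigr)$ and specializing $y=0$ forces ${\bf b}={\bf d}$ by Theorem \ref{detgthm} (ii); then expanding ${\bf x}_{t_1}^{\bf a}$ at $t_2$ and ${\bf x}_{t_2}^{\bf d}$ at $t_1$ and adding the two identities exhibits a sum of two expressions, each a quotient of polynomials with nonnegative coefficients, equal to zero, so each vanishes and ${\bf x}_{t_1}^{\bf a}={\bf x}_{t_2}^{\bf d}$. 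You need to replace the appeal to linear independence by this (or an equivalent) direct positivity argument.
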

This conjecture was verified for skew-symmetrizable cluster algebras, see  \cite{DWZ,NZ,GHKK} for details.

 Motivated by Theorem \ref{thmghkk}, we realize that it is reasonable to define {\em {\bf g}-vectors} and {\em G-matrix} in a general framework for GLP algebras pointed at certain cluster $X_{t_0}$ (Definition \ref{defprincipal}). And the  {\bf g}-vectors defined for pointed GLP algebras are  natural generalization of
{\bf g}-vectors for cluster algebras with principal coefficients.

In this paper, we prove that Conjecture \ref{conjg} holds for positive and pointed GLP algebras (see Corollary \ref{corconj}). As a direct application, we obtain the fact that  Conjecture \ref{conjg} holds for  skew-symmetrizable cluster algebras and acyclic sign-skew-symmetric cluster algebras.

This paper is organized as follows. In Section 2, some basic definitions and notations are introduced. In Section 3, we prove that if a  GLP algebra $\mathcal A(M_T)$  is both positive and {\bf d}-vector-positive, then it has the  proper Laurent monomial property (Theorem \ref{mainthm}) and thus its cluster monomials are linearly independent. In Section 4, we verify that skew-symmetric cluster algebras are {\bf d}-vector-positive (Theorem \ref{thmdgood}). In Section 5, we prove that Conjecture \ref{conjg} holds for positive and pointed GLP algebras (see Theorem \ref{thmmonomial}, Theorem \ref{detgthm} and Corollary \ref{corconj}).

\section{Preliminaries}
\subsection{cluster pattern and cluster algebra}

Recall that $(\mathbb P, \oplus, \cdot)$ is a {\bf semifield } if $(\mathbb P,  \cdot)$ is an abelian multiplicative group endowed with a binary operation of auxiliary addition $\oplus$ which is commutative, associative, and distributive with respect to the multiplication $\cdot$ in $\mathbb P$.

Let $Trop(u_i:i\in I)$ be a free abelian group generated by $\{u_i: i\in I\}$ for a finite set of index $I$. We define the addition $\oplus$ in $Trop(u_i:i\in I)$ by $\prod\limits_{i}u_i^{a_i}\oplus\prod\limits_{i}u_i^{b_i}=\prod\limits_{i}u_i^{min(a_i,b_i)}$, then $(Trop(u_i:i\in I), \oplus)$ is a semifield, which is called a {\bf tropical semifield}.

The multiplicative group of any semifield $\mathbb P$ is torsion-free for multiplication \cite{FZ}, hence its group ring $\mathbb Z\mathbb P$ is a domain.
We take an ambient field $\mathcal F$  to be the field of rational functions in $n$ independent variables with coefficients in $\mathbb Z\mathbb P$.

An integer matrix $B=(b_{ij})$ is called {\bf skew-symmetrizable} if there exists a diagonal matrix $S$ with positive integer diagonal entries such that $SB$ is skew-symmetric. Let $B$ be  a skew-symmetrizable matrix , we can encode the sign pattern of matrix entries of $B$ by the directed graph $\Gamma(B)$ with the vertices $1,2,\cdots,n$ and the directed edges $(i,j)$ for $b_{ij}>0$. A skew-symmetrizable matrix matrix $B$ is called {\bf acyclic} if $\Gamma(B)$ has no oriented cycles.

\begin{Definition}
A {\bf(labeled) seed} $\Sigma$ in $\mathcal F$ is a triplet $(X,Y,B)$ such that

(i)  $X=(x_1,\cdots, x_n)$, where $x_1\cdots,x_n$  form a free generating set of $\mathcal F$. $X$ is called a {\bf cluster} and $x_1\cdots,x_n$  are called {\bf cluster variables}.

(ii) $Y=(y_1,\cdots,y_n)$ is an $n$-tuple of elements in $\mathbb P$, where $y_1,\cdots,y_n$ are called {\bf coefficients}.

(iii) $B=(b_{ij})$ is a $n\times n$ skew-symmetrizable  integer matrix, called an {\bf exchange matrix}.

The seed $\Sigma$ is called {\bf acyclic} if $B$ is acyclic.
\end{Definition}

Denote by $[a]_+=max\{a,0\}$ for any $a\in \mathbb R$.
\begin{Definition}
Let $\Sigma=(X,Y,B)$ be a seed in $\mathcal F$. Define the {\bf mutation}  of the seed $\Sigma$ in the direction $k\in\{1,\cdots,n\}$ as a new triple $\mu_k(\Sigma)=\bar \Sigma=(\bar X, \bar Y, \bar B)$ in $\mathcal F$:
\begin{eqnarray}
\label{eq1}\bar x_i&=&\begin{cases}x_i~,&\text{if } i\neq k\\ \frac{y_k\prod\limits_{j=1}^nx_j^{[b_{jk}]_+}+ \prod\limits_{j=1}^nx_j^{[-b_{jk}]_+}}{({1\bigoplus y_k})x_k},~& \text{if }i=k.\end{cases}\\
\bar y_i&=&\begin{cases} y_k^{-1}~,& i=k\\ y_iy_k^{[b_{ki}]_+}(1\bigoplus y_k)^{-b_{ki}}~,&otherwise.
\end{cases}\\
\bar b_{ij}&=&\begin{cases}-b_{ij}~,& i=k\text{ or } j=k;\\ b_{ij}+b_{ik}[-b_{kj}]_++[b_{ik}]_+b_{kj}~,&otherwise.\end{cases}.
\end{eqnarray}

\end{Definition}
It can be seen that $\mu_k(\Sigma)$ is also a seed and $\mu_k(\mu_k(\Sigma))=\Sigma$.

\begin{Definition}
 (i) A {\bf cluster pattern}  $M$ in $\mathbb P$ is an assignment of a seed  $\Sigma_t$ to every vertex $t$ of the $n$-regular tree $\mathbb T_n$, such that for any edge $t^{~\underline{\quad k \quad}}~ t^{\prime},~\Sigma_{t^{\prime}}=\mu_k(\Sigma_t)$. If $\mathbb P=Trop(u_i: i\in I)$ with $|I|<+\infty$, $M$ is called a {\bf cluster pattern of geometric type}.

 (ii) A cluster pattern $M$ in $Trop(y_1,\cdots,y_n)$ is said to be a {\bf principal coefficients cluster pattern} if $M$ has a seed $\Sigma_{t_0}=(X_{t_0},Y_{t_0},B_{t_0})$ such that $Y_{t_0}=(y_1,\cdots,y_n)$. In this case, $M$ is called  a {\bf cluster pattern with principal coefficients} at $t_0$.
\end{Definition}
Note that a cluster pattern $M$ is uniquely determined by any seed of $M$.
We always denote $\Sigma_t=(X_t,Y_t,B_t)$, where $X_t=(x_{1;t},\cdots, x_{n;t}),~ Y_t=(y_{1;t},\cdots, y_{n;t}), ~B_t=(b_{ij}^t).$
\begin{Definition} (a)~Let $M$ be a cluster pattern,   the {\bf cluster algebra} $\mathcal A(M)$ associated with the given cluster pattern $M$ is the $\mathbb {ZP}$-subalgebra of the field $\mathcal F$ generated by all cluster variables of $M$. If $M$ is a cluster pattern of geometric type, then $\mathcal A(M)$ is called a {\bf cluster algebra of geometric type}.

(b)~If $M$ is a cluster pattern with principal coefficients at $t_0$, then the cluster algebra $\mathcal A(M)$ is called a {\bf cluster algebra with principal coefficients} at $t_0$.

 \end{Definition}

\begin{Theorem}\label{laurent}
Let $\mathcal A(M)$ be a cluster algebra, and $x_{i;t}$ be a cluster variable of $\mathcal A(M)$. Let  $X_{t_0}$ be any cluster of $\mathcal A(M)$, then

(i) (Laurent Phenomenon \cite{GHKK}) $x_{i;t}$ can be expressed a Laurent polynomial in $x_{1;t_0},\cdots, x_{n;t_0}$ with coefficients in $\mathbb {ZP}$.

(ii) (Positivity of cluster variables \cite{FZ}) if further, $\mathcal A(M)$ is a {\em skew-symmetrizable}  cluster algebra of geometry type, then $x_{i;t}$ can be expressed a Laurent polynomial in $x_{1;t_0},\cdots, x_{n;t_0}$ with coefficients in $\mathbb {NP}$.

\end{Theorem}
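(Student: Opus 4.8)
Both statements are classical, so the plan is to reconstruct the standard arguments and to flag where the genuine content sits; I would treat (i) and (ii) separately, since the Laurent phenomenon admits an elementary inductive proof while positivity does not. For (i), fix the base vertex $t_0$ and argue by induction on the distance $d(t_0,t)$ in the tree $\mathbb T_n$ that $x_{i;t}$ lies in the Laurent ring $\mathbb{ZP}[x_{1;t_0}^{\pm1},\dots,x_{n;t_0}^{\pm1}]$. The cases $d(t_0,t)\le 1$ are immediate: $d=0$ gives the initial variables and $d=1$ is read off directly from the exchange relation (\ref{eq1}). For the inductive step the natural tool is the Caterpillar Lemma of Fomin--Zelevinsky, which reduces the statement along an arbitrary path in $\mathbb T_n$ to a local statement about two or three consecutive mutations in distinct directions $k\neq l$. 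The crux is a coprimality claim: after clearing denominators, the numerator of a twice-mutated variable is not divisible by the cluster variable it is divided by, so that denominators cannot accumulate and the Laurent property propagates. Here one uses the precise binomial shape of the exchange relation in (\ref{eq1}) together with the transformation rules for $\bar y_i$ and $\bar b_{ij}$.

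For (ii), positivity is much deeper and cannot be extracted from the mutation recursion directly, because genuine cancellations occur at intermediate steps; a global model is required in which each Laurent coefficient is manifestly a count of something. The approach I would take is the scattering-diagram and theta-function machinery: build the consistent scattering diagram attached to $B_{t_0}$, realize each cluster variable as the theta function indexed by its $\mathbf{g}$-vector, and expand that theta function as a generating function of broken lines, each broken line contributing a single Laurent monomial with a non-negative coefficient. Since all the wall-crossing functions are positive, the resulting expansion has coefficients in $\mathbb{NP}$, which gives the claim and simultaneously re-proves (i). In the skew-symmetric geometric case one could instead run a representation-theoretic argument (Caldero--Chapoton-type formulas expressing coefficients as Euler characteristics of quiver Grassmannians, or Lee--Schiffler's combinatorial expansion via perfect matchings of snake graphs), where positivity is visible because every coefficient counts a geometric or combinatorial object.

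I expect the main obstacle to differ between the two parts. In (i) it is the coprimality verification inside the Caterpillar Lemma, the only non-formal input, which requires tracking how the polynomial data transforms under the mutation rules and checking that no spurious common factor appears. In (ii) the essential difficulty is establishing the dictionary itself: proving that the theta function attached to a $\mathbf{g}$-vector, or equivalently the Caldero--Chapoton evaluation or the snake-graph matching sum, genuinely equals the corresponding cluster variable. That identification is where all the work lies and is exactly the content imported from \cite{GHKK} and \cite{FZ}.
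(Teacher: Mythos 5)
The paper does not prove Theorem \ref{laurent}: it is quoted as background with citations, and everything downstream of it (Remark \ref{remklaurent}, the verification that skew-symmetric cluster algebras are positive GLP algebras) simply imports the statement. Your sketch is therefore not competing with an argument in the paper; what it does is correctly reconstruct the standard proofs from precisely the sources being cited --- the Caterpillar Lemma with its coprimality verification for the Laurent phenomenon, and the scattering-diagram/theta-function expansion with positive wall-crossing functions for positivity --- and it correctly locates where the genuine content sits in each. Two remarks. First, your attributions are actually more accurate than the paper's own labels, which attach \cite{GHKK} to part (i) and \cite{FZ} to part (ii): the Laurent phenomenon is Theorem 3.1 of \cite{FZ}, whereas positivity was only conjectured there and is a theorem of \cite{LS} in the skew-symmetric case and of \cite{GHKK} in the skew-symmetrizable geometric case. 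Second, a minor slip: the perfect-matching/snake-graph expansions belong to the surface case (Musiker--Schiffler--Williams), while the general skew-symmetric positivity proof of \cite{LS} is a different, elementary induction; since you offer that only as an alternative route, this does not affect the viability of your plan. As a proof your proposal remains a sketch --- the coprimality step and the identification of cluster variables with theta functions are named but not carried out --- but that is the same level of detail at which the paper itself treats this statement, so there is no gap relative to what the paper actually establishes.
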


\begin{Remark}\label{remklaurent}
By Theorem \ref{laurent} (ii) and Fomin-Zelevinsky's separation of addition formula, i.e., Theorem 3.7 of \cite{FZ3}, we know positivity of cluster variables holds for any skew-symmetrizable cluster algebra.
\end{Remark}

\subsection{GLP patterns and  GLP algebras}

Let $\mathbb P$ be an Abelian group such that its group ring $\mathbb Z\mathbb P$ is a domain.
We take an ambient field $\mathcal F$  to be the field of rational functions in $n$ independent variables with coefficients in $\mathbb {ZP}$.

Recall that $X_t=\{x_{1;t},\cdots,x_{n;t}\}$ is called a {\bf cluster} in $\mathcal F$ if $x_{1;t},\cdots,x_{n;t}$ form a free generating set of $\mathcal F$. And in this case, $x_{1;t},\cdots,x_{n;t}$ are called {\bf cluster variables}.

 Denote $\mathcal L(t)=\mathbb {ZP}[x_{1;t}^{\pm1},\cdots,x_{n;t}^{\pm1}],\; \mathcal L^+(t)=\mathbb {NP}[x_{1;t}^{\pm1},\cdots,x_{n;t}^{\pm1}]$.

\begin{Definition} \label{GLPA} Assume $T$ is a (finite or infinite) index set.

(i) Let $M_T:=\{X_t|t\in T\}$ be a family of clusters in $\mathcal F$, which  is called a {\bf generalized Laurent phenomenon pattern  of rank $n$} or shortly {\bf GLP pattern } if $X_{t^{\prime}}\subseteq \mathcal L(t)$ for any $t,t^{\prime}\in T$. In this case,  $X_t$ is  called a {\bf cluster} of $M_T$ at $t$, whose elements are called {\bf cluster variables} of $M_T$.

(ii) Given a GLP pattern $M_T$, the $\mathbb {ZP}$-subalgebra $\mathcal A(M_T)$ of $\mathcal F$ generated by all cluster variables of $M_T$, is called the {\bf generalized Laurent phenomenon algebra  of rank $n$ } or shortly {\bf GLP algebra} associated with $M_T$.

(iii) A GLP pattern $M_T$ is said to be {\bf positive}  if $X_{t^{\prime}}\subseteq \mathcal L^+(t)$ for any $t,t^{\prime}\in T$.  In this case,  $\mathcal A(M_T)$ is called a {\bf positive GLP algebra},  or say, the {\bf positivity} of the GLP algebra $\mathcal A(M_T)$ holds.
\end{Definition}

It is easy to see that
cluster algebras, generalized cluster algebras \cite{CS}, Laurent phenomenon algebras \cite{LP} are examples of GLP algebras.
 In Definition \ref{GLPA}, the word ``generalized" is just to distinguish Lam and Pylyavskyy's Laurent phenomenon algebras in \cite{LP}.

Note that if a GLP algebra $\mathcal A(M_T)$ is positive, we sometimes also say that the positivity of cluster variables holds for $\mathcal A(M_T)$.
 By Remark \ref{remklaurent}, we know {\em skew-symmetrizable}  cluster algebras are examples of positive GLP algebras.

From the definition of (positive) GLP pattern, each single cluster variable forms a trivial positive GLP pattern. Then, giving the partial order in the set of GLP patterns via the inclusion relation and using the Zorn's Lemma, it is easy to obtain the following:
\begin{Proposition}\label{maxsub}
Let $M_T$ be GLP pattern.

(i) For every nonempty subset $T^{\prime}$ of $T$, $M_{T^{\prime}}=\{X_t|t\in T^{\prime}\}$ is a GLP pattern. We call $M_{T^{\prime}}$  a {\bf subpattern} of $M_T$.

(ii) There exists a {\em maximal} positive subpattern $M_{T^{\prime}}$ of $M_T$.

\end{Proposition}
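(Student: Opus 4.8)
The plan is to treat the two parts separately, with part (i) following directly by restriction and part (ii) resting on a standard Zorn's Lemma argument whose only substantive point is the verification of the chain hypothesis.

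For part (i), I would observe that the defining condition of a GLP pattern is a condition universally quantified over pairs of indices. Given a GLP pattern $M_T$ and a nonempty subset $T' \subseteq T$, for any $t, t' \in T'$ we automatically have $t, t' \in T$, so the inclusion $X_{t'} \subseteq \mathcal{L}(t)$ holds by the assumption that $M_T$ is a GLP pattern. Hence $M_{T'} = \{X_t \mid t \in T'\}$ satisfies the defining condition and is itself a GLP pattern.

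For part (ii), I would let $\mathcal{S}$ denote the collection of all subsets $T' \subseteq T$ for which the subpattern $M_{T'}$ is positive, partially ordered by inclusion. First I would check $\mathcal{S} \neq \emptyset$: for any single index $t \in T$, the singleton subpattern $M_{\{t\}} = \{X_t\}$ is positive, since the cluster variables $x_{1;t}, \ldots, x_{n;t}$ are themselves monomials lying in $\mathcal{L}^+(t) = \mathbb{NP}[x_{1;t}^{\pm 1}, \ldots, x_{n;t}^{\pm 1}]$; thus $\{t\} \in \mathcal{S}$. Next, given a chain $\{T_\alpha\}$ in $\mathcal{S}$, I would take its union $T^* = \bigcup_\alpha T_\alpha$ as the candidate upper bound and verify that $T^* \in \mathcal{S}$, i.e. that $M_{T^*}$ is positive. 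By Zorn's Lemma the poset $\mathcal{S}$ then has a maximal element, which is precisely a maximal positive subpattern of $M_T$.

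The crux, and the only place any care is needed, is the chain verification. To show $M_{T^*}$ is positive I must check $X_{t'} \subseteq \mathcal{L}^+(t)$ for every pair $t, t' \in T^*$. Given such a pair, there are indices $\alpha, \beta$ with $t \in T_\alpha$ and $t' \in T_\beta$; because $\{T_\alpha\}$ is totally ordered by inclusion, one of $T_\alpha, T_\beta$ contains the other, so both $t$ and $t'$ lie in a single member $T_\gamma$ of the chain. Since $M_{T_\gamma} \in \mathcal{S}$ is positive, the desired inclusion $X_{t'} \subseteq \mathcal{L}^+(t)$ holds. This is the key point: positivity is a condition on pairs of indices, and in a chain any two indices already appear together in one member, so positivity passes to the union. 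I expect this pairwise-to-union step to be the main, and essentially only, obstacle, everything else being formal.
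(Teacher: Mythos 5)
Your proposal is correct and follows essentially the same route the paper indicates: the paper only sketches the argument (singleton subpatterns are positive, order by inclusion, apply Zorn's Lemma), and your write-up supplies the same argument with the chain-union verification made explicit. The key observation you isolate — that positivity is a pairwise condition on indices, so any two indices in the union of a chain already lie in a common member — is exactly the point the paper leaves implicit.
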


Denote by ${\bf x}_t^{\bf a}=\prod\limits_{i=1}^{n}x_{i;t}^{a_i}\in\mathcal F$ with ${\bf a}=(a_1,\cdots,a_n)^{\top}\in\mathbb Z^n$, which
   is called a {\bf cluster monomial} in $X_t$ if ${\bf a}\in\mathbb N^n$ and is called a {\bf proper Laurent monomial} in $X_t$ if ${\bf a}\notin\mathbb N^n$. Denote by $CM(t)$  the set of cluster monomials in $X_t$. It is easy to see that if $X_{t_1}$ and $X_{t_2}$ have common cluster variables, then $CM(t_1)\cap CM(t_2)\not=\emptyset$.

\begin{Definition}(\cite{CLF,CKLP})\label{deflaurent}
(i) A GLP algebra $\mathcal A(M_T)$  is said to have  the {\bf proper Laurent monomial property with respect to a cluster $X_{t_0}$} if for any cluster $X_t$ of $\mathcal A(M_T)$, every cluster monomial ${\bf x}_{t}^{a}\in CM(t)\setminus CM(t_0)$ is a $\mathbb {ZP}$-linear combination of proper Laurent monomials in $X_{t_0}$.

(ii) A GLP algebra $\mathcal A(M_T)$  is said to have  the {\bf proper Laurent monomial property} if $\mathcal A(M_T)$ has the proper Laurent monomial property with respect to each cluster of $M_T$.
\end{Definition}

 The linear independence of cluster monomials of cluster algebras was proved in the skew-symmetric case from triangulated surfaces and the   skew-symmetrizable  case respectively in (Theorem 6.4, \cite{CLF}) and (Theorem 7.20, \cite{GHKK}) via the proper Laurent monomial property.
 The following theorem generalizes  Theorem 6.4 of [5] to the case of GLP
algebra $\mathcal A(M_T)$  in the similar method. For convenience of readers, we repeat the proof here.

\begin{Theorem}\label{properthm}
Let $\mathcal A(M_T)$ be a GLP algebra with  the proper Laurent monomial property. Then cluster monomials of $\mathcal A(M_T)$ are linearly independent
over $\mathbb {ZP}$.
\end{Theorem}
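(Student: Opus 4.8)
The plan is to argue by contradiction, reducing everything to a single Laurent polynomial ring and then comparing coefficients. Suppose the cluster monomials were linearly dependent, so that there is a nontrivial relation $\sum_{j} c_j\, m_j = 0$ in $\mathcal A(M_T)$, where the $m_j$ are pairwise distinct cluster monomials, each $m_j \in CM(t_j)$ for some cluster $X_{t_j}$, and each $c_j \in \mathbb{ZP}\setminus\{0\}$. Among the finitely many monomials appearing I would fix one index, say $j=1$, and set $t_0 := t_1$, so that $m_1 \in CM(t_0)$. Since $M_T$ is a GLP pattern, $X_{t'}\subseteq \mathcal L(t_0)$ for every $t'$, hence every cluster variable, and therefore every $m_j$, lies in the Laurent polynomial ring $\mathcal L(t_0)=\mathbb{ZP}[x_{1;t_0}^{\pm1},\dots,x_{n;t_0}^{\pm1}]$; this is precisely what lets me compare all terms in one ambient ring.

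Next I would split the index set according to membership in $CM(t_0)$. For each $j$ with $m_j\in CM(t_0)$, write $m_j={\bf x}_{t_0}^{{\bf a}_j}$ with ${\bf a}_j\in\mathbb N^n$; since $X_{t_0}$ is a free generating set of $\mathcal F$, this expansion is unique, and distinct elements of $CM(t_0)$ necessarily have distinct exponent vectors ${\bf a}_j$. For each $j$ with $m_j\notin CM(t_0)$, I invoke the proper Laurent monomial property with respect to $X_{t_0}$ to write $m_j$ as a $\mathbb{ZP}$-linear combination $\sum_k d_{jk}\,{\bf x}_{t_0}^{{\bf b}_{jk}}$ of proper Laurent monomials, i.e.\ with every ${\bf b}_{jk}\in\mathbb Z^n\setminus\mathbb N^n$. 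Substituting these expansions into $\sum_j c_j m_j=0$ produces an identity among Laurent monomials in $\mathcal L(t_0)$.

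The crux is then a separation argument. The monomials $\{{\bf x}_{t_0}^{\bf c}: {\bf c}\in\mathbb Z^n\}$ form a $\mathbb{ZP}$-basis of $\mathcal L(t_0)$, so coefficients may be compared term by term. The exponent vectors ${\bf a}_j\in\mathbb N^n$ arising from the $CM(t_0)$-part are disjoint from the exponent vectors ${\bf b}_{jk}\in\mathbb Z^n\setminus\mathbb N^n$ arising from the proper Laurent expansions, so no cancellation between the two families is possible. Collecting the coefficient of each genuine monomial ${\bf x}_{t_0}^{\bf a}$ with ${\bf a}\in\mathbb N^n$, and using that at most one $m_j\in CM(t_0)$ has exponent ${\bf a}$, I conclude $c_j=0$ for every $j$ with $m_j\in CM(t_0)$. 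In particular $c_1=0$, contradicting the assumed nontriviality. Hence no nontrivial relation exists and the cluster monomials are linearly independent over $\mathbb{ZP}$.

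I expect the two load-bearing points, rather than genuine obstacles, to be the following. First, the disjointness of the two families of monomials, which is exactly the defining distinction between cluster monomials (exponents in $\mathbb N^n$) and proper Laurent monomials (exponents outside $\mathbb N^n$); this is what prevents the $CM(t_0)$-terms and the expanded terms from ever interfering. Second, the uniqueness of the Laurent expansion, which rests on $X_{t_0}$ being a free generating set of $\mathcal F$. Once these are in place the proof is a direct coefficient comparison, with the proper Laurent monomial property doing the essential work by guaranteeing that every monomial outside $CM(t_0)$ contributes only to the ``negative'' part of the monomial basis.
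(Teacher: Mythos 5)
Your proposal is correct and follows essentially the same route as the paper: expand everything in $\mathcal L(t_0)$, split the terms into genuine monomials (from $CM(t_0)$) and proper Laurent monomials (via the proper Laurent monomial property), and compare coefficients of a fixed ${\bf x}_{t_0}^{{\bf a}}$ with ${\bf a}\in\mathbb N^n$ to kill each coefficient in turn. The only cosmetic difference is that you phrase it as a contradiction while the paper directly shows each $c_{t_0,{\bf v}_0}$ vanishes.
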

\begin{proof}
Suppose that $\sum\limits_{t, {\bf v}} c_{t,{\bf v}}{\bf x}_t^{\bf v}=0$, where ${\bf v}\in\mathbb N^n, c_{t,{\bf v}}\in\mathbb {ZP}$. Fix a cluster $X_{t_0}$ and a ${\bf v}_0\in \mathbb N^n$, by the proper Laurent monomial property,  each ${\bf x}_t^{\bf v}\notin CM(t_0)$ is a sum of proper Laurent monomials in $\mathcal L(t_0)$. Thus
$\sum\limits_{t, {\bf v}} c_{t,{\bf v}}{\bf x}_t^{\bf v}$ can be written as a Laurent polynomial in $\mathcal L(t_0)$ with the form of $\Sigma_1+\Sigma_2$,
where $\Sigma_1$ is a sum of monomials in $\mathcal L(t_0)$ and $\Sigma_2$ is a sum of proper Laurent monomials in $\mathcal L(t_0)$.
The coefficient of ${\bf x}_{t_0}^{{\bf v}_0}$  in $\Sigma_1$, as well as in $\Sigma_1+\Sigma_2$, is precisely $c_{t_0,{\bf v}_0}$.
  Then, $\Sigma_1+\Sigma_2=0$ results in $c_{t_0,{\bf v}_0}=0$.
 And thus cluster monomials of $\mathcal A(M_T)$ are linearly independent
over $\mathbb {ZP}$.
\end{proof}

 The above theorem tells us the significance of the proper Laurent monomial property to characterize the linearly independence of cluster monomials for GLP algebras.
 In the next section, we will give the sufficient condition for a GLP algebra to have the proper Laurent monomial property.
 Moreover, the result in Section 4 will show that skew-symmetric cluster algebras, as a special kind of GLP algebras, always satisfy this sufficient condition and then they have the proper Laurent monomial property.

\section {From {\bf d}-vector-positivity to proper Laurent monomial  property of GLP algebras}

 From Conjecture \ref{conjd} and its partial answers in  \cite{FZ}, \cite{FST} and \cite{GHKK}, we know the importance of the positivity of {\bf d}-vectors of cluster variables in a cluster algebra. Motivated by this view, we now give the notion of {\bf d}-vector-positivity for general GLP algebras and discuss what there would happen under this condition.

Given a GLP algebra $\mathcal A(M_T)$, let $X_{t_0}$ be a cluster of $\mathcal A(M_T)$. By the definition of GLP algebra,  any cluster variable $x_{i;t}$ of $\mathcal A(M_T)$ has the form of $x_{i;t}=\sum\limits_{{\bf v}\in V} c_{\bf v}{\bf x}_{t_0}^{\bf v}$, where $V$ is a subset of $\mathbb Z^n$, $0\neq c_{\bf v}\in \mathbb {ZP}$. Let $-d_{ji}^t$ be the minimal exponent of $x_{j;t_0}$ appearing in the expansion $x_{i;t}=\sum\limits_{{\bf v}\in V} c_{\bf v}{\bf x}_{t_0}^{\bf v}$. Then $x_{i;t}$ has the form of
\begin{eqnarray}
\label{eqd}x_{i;t}=\frac{f_{i;t}^{t_0}(x_{1;t_0},\cdots,x_{n,t_0})}{x_{1;t_0}^{d_{1i}^t}\cdots x_{n;t_0}^{d_{ni}^t}},
\end{eqnarray}
where $f_{i;t}^{t_0}\in\mathbb {ZP}[x_{1;t_0},\cdots,x_{n;t_0}]$ with  $x_{j;t_0}\nmid f_{i;t}$.
The vector ${\bf d}^{t_0}(x_{i;t}) = (d_{1i}^{t},\cdots, d_{ni}^t)^{\top}$ is called the {\bf denominator vector} (briefly, {\bf d-vector})  of the cluster variable $x_{i;t}$ with respect to $X_{t_0}$, and $D_t^{t_0}=({\bf d}^{t_0}(x_{1;t}) \cdots {\bf d}^{t_0}(x_{n;t}))=({d_{ij}^{t}})_{n\times n}$ is called the {\bf D-matrix} of the cluster $X_t$ with respect to $X_{t_0}$.
Clearly, $D_{t_0}^{t_0}=-I_n$.
Sometimes we  write ${\bf d}^{t_0}(x_{i;t})$ as the {\em {\bf d}-vector} of $x_{i;t}$ with respect to $X_{t_0}$.

\begin{Definition}\label{defd}
Let $\mathcal A(M_T)$ be a GLP algebra.

(i) A cluster variable $x$ of $\mathcal A(M_T)$ is called {\bf d-vector-positive}, if  $x\in X_{t}$  or  ${\bf d}^{t}(x)\in \mathbb N^n$ holds for any cluster $X_t$ of  $\mathcal A(M_T)$.

(ii) A cluster $X_{t_0}$ of $\mathcal A(M_T)$ is called {\bf d-vector-positive}, if any cluster variable  in $X_{t_0}$ is  d-vector-positive.

(iii) $\mathcal A(M_T)$ is called {\bf d-vector-positive}, if any cluster $X_t$ of  $\mathcal A(M_T)$  is  {\bf d}-vector-positive.

\end{Definition}

Using this definition,  Conjecture \ref{conjd} can be described as that every cluster algebra is {\bf d}-vector-positive.
 Then as mentioned in Section 1,  cluster algebras are  {\bf d}-vector-positive in the cases that (i)\;
 cluster algebras of rank $2$ (Theorem 6.1 of \cite{FZ}), (ii)\; cluster algebras arising from surfaces (see \cite{FST}), and (iii)\; cluster algebras of finite type (see \cite{CCS,CP}).

In this section, we will prove that if  a GLP algebra $\mathcal A(M_T)$ is both positive and {\bf d}-vector-positive, then $\mathcal A(M_T)$ has the {\em  proper Laurent monomial property} (Theorem \ref{mainthm}).

Given a GLP pattern $M_T$, we can select a subset $T^{\prime}$ of $T$ such that $M_{T^{\prime}}$ is a {\em {\bf d}-vector-positive} GLP subpattern of $M_T$. For example, fix an element $t_0$ of $T$, and let $T^{\prime}=\{t_0\}$, then $M_{T^{\prime}}$ is a {\em {\bf d}-vector-positive} GLP subpattern of $M_T$. Similarly with Proposition \ref{maxsub}, it is easy to see  by Zorn' lemma that  every GLP pattern has a maximal  {\bf d}-vector-positive GLP subpattern.

\begin{Lemma}\label{mainlemma}
Let $\mathcal A(M_T)$ be a positive GLP algebra and $X_{t_0}$ be a  {\bf d}-vector-positive cluster of $\mathcal A(M_T)$. Then $\mathcal A(M_T)$ has the proper Laurent monomial property with respect to $X_{t_0}$.
\end{Lemma}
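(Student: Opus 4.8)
The plan is to fix a cluster monomial ${\bf x}_t^{\bf a}\in CM(t)\setminus CM(t_0)$ and to show directly that its (unique) Laurent expansion in $X_{t_0}$ involves no honest cluster monomial of $X_{t_0}$; once this is known, positivity of $\mathcal A(M_T)$ already writes ${\bf x}_t^{\bf a}$ as an $\mathbb{NP}$-combination of proper Laurent monomials in $X_{t_0}$, which is exactly the proper Laurent monomial property with respect to $X_{t_0}$ in the sense of Definition \ref{deflaurent}. First I would record what positivity buys us. Writing $x_{i;t}=\sum_{\bf u}c^{(i)}_{\bf u}{\bf x}_{t_0}^{\bf u}$ with $c^{(i)}_{\bf u}\in\mathbb{NP}$, the absence of cancellation makes the support of ${\bf x}_t^{\bf a}=\prod_i x_{i;t}^{a_i}$ equal to the Minkowski sum $\sum_i a_i\,\mathrm{supp}_{t_0}(x_{i;t})$, so that minimal exponents add and ${\bf d}^{t_0}({\bf x}_t^{\bf a})=\sum_i a_i{\bf d}^{t_0}(x_{i;t})$. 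Since ${\bf x}_t^{\bf a}\notin CM(t_0)$, not every $x_{i;t}$ with $a_i>0$ can lie in $X_{t_0}$ (otherwise ${\bf x}_t^{\bf a}$ would be a monomial in the variables of $X_{t_0}$, hence in $CM(t_0)$); fix one index $i_0$ with $a_{i_0}>0$ and $x_{i_0;t}\notin X_{t_0}$.

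Next I would bring in the d-vector-positivity of $X_{t_0}$, which is a statement about how the variables of $X_{t_0}$ sit inside $X_t$. For each $j$, either $x_{j;t_0}\in X_t$, in which case $x_{j;t_0}$ is a variable of $X_t$ and we set $g_j:=1$, or $x_{j;t_0}\notin X_t$ and then by Definition \ref{defd} we have ${\bf d}^t(x_{j;t_0})\in\mathbb N^n$. In either case positivity gives $x_{j;t_0}=g_j\,{\bf x}_t^{-{\bf d}^t(x_{j;t_0})}$ with $g_j\in\mathbb{NP}[X_t]$ a polynomial not divisible by any $x_{k;t}$ and with ${\bf d}^t(x_{j;t_0})\in\mathbb N^n$. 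Suppose, for contradiction, that some honest ${\bf v}^*\in\mathbb N^n$ occurs in the expansion ${\bf x}_t^{\bf a}=\sum_{\bf v}c_{\bf v}{\bf x}_{t_0}^{\bf v}$, and split this expansion as $P+Q$, where $P=\sum_{{\bf v}\in\mathbb N^n}c_{\bf v}{\bf x}_{t_0}^{\bf v}\neq 0$ is the honest part and $Q$ the proper part; by positivity these two parts have disjoint supports and $c_{\bf v}\in\mathbb{NP}$ throughout.

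The core step is to read the identity ${\bf x}_t^{\bf a}=P+Q$ back in the cluster $X_t$. Substituting $x_{j;t_0}=g_j\,{\bf x}_t^{-{\bf d}^t(x_{j;t_0})}$ and clearing all $X_{t_0}$- and $X_t$-denominators, I would obtain a genuine identity of positive polynomials in $\mathbb{NP}[X_t]$ in which the left-hand side is a single $X_t$-monomial times $\prod_j g_j^{N_j}$, while on the right the honest terms coming from $P$ carry the full factor $\prod_j g_j^{N_j}$ and each proper term of $Q$ is divisible, for some index $j'$ with $x_{j';t_0}\notin X_t$, only by a strictly smaller power of the nonmonomial polynomial $g_{j'}$. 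Tracking the $g_{j'}$-divisibility (equivalently, the $g_{j'}$-adic order) and using positivity to prevent the extremal-order parts from cancelling, the presence of a genuine honest term forces the proper part $Q$ to vanish. Then ${\bf x}_t^{\bf a}=P$ is an honest polynomial combination of cluster monomials of $X_{t_0}$, and comparing once more with its single-monomial form in $X_t$ forces ${\bf x}_t^{\bf a}\in CM(t_0)$, the desired contradiction.

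I expect the main obstacle to be precisely this no-cancellation control in the last paragraph: positivity is genuinely available only in the $X_{t_0}$-expansion, whereas an individual proper Laurent monomial ${\bf x}_{t_0}^{\bf v}$ becomes an honest rational function in $X_t$, so the $X_t$-side need not be sign-coherent. Making the $g_{j'}$-adic order additive, and arguing that the minimal-order contributions cannot cancel, requires treating the $g_j$ as sufficiently independent (coprime, nonmonomial) factors in $\mathbb{NP}[X_t]$; carrying this out uniformly over the whole (possibly infinite) index set $T$ and over the coefficient domain $\mathbb{ZP}$, with no mutation available, is where the real work of the proof lies.
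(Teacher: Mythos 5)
Your overall skeleton --- expand ${\bf x}_t^{\bf a}$ in $X_{t_0}$, substitute $x_{j;t_0}=g_j\,{\bf x}_t^{-{\bf d}^t(x_{j;t_0})}$ back into the cluster $X_t$, clear denominators, and use positivity to exclude exponent vectors in $\mathbb N^n$ --- is the right one and is essentially the paper's. But the core step is carried by the wrong invariant, and it is exactly the step you flag as ``the real work.'' You propose to separate the honest part $P$ from the proper part $Q$ by tracking the $g_{j'}$-adic order of each term and asserting that positivity prevents the minimal-order contributions from cancelling. Positivity controls cancellation of \emph{monomials}, not of $g_{j'}$-adic valuations: a sum of polynomials with coefficients in $\mathbb {NP}$, each of $g_{j'}$-order zero, can perfectly well be divisible by $g_{j'}$ (already $1$ and $x$ each have order zero with respect to $g=1+x$, while their sum has order one); and the $g_j$ need not be prime or pairwise coprime, so the order is not even additive. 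Worse, if the valuation argument worked as stated it would apply whether or not an honest term is present, and would show that no ${\bf v}$ in the support can have a negative component $v_{j'}$ --- i.e.\ it would prove $Q=0$ unconditionally, contradicting the very conclusion of the lemma (the whole expansion is supposed to consist of proper Laurent monomials). So the gap is not merely an unfinished estimate; the proposed mechanism is false. (A smaller slip: when $x_{j;t_0}\in X_t$ one has ${\bf d}^t(x_{j;t_0})=-e_l\notin\mathbb N^n$, so your ``in either case \dots ${\bf d}^t(x_{j;t_0})\in\mathbb N^n$'' fails in the first case, though there the exchange is a genuine monomial and this is harmless.)

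The paper's proof uses positivity only at the level of exponents of the $x_{j;t}$, where it is actually valid. After substitution and clearing of denominators one obtains $g=\sum_{\bf v}c_{\bf v}{\bf x}_{t}^{-D_{t_0}^{t}{\bf v}-{\bf a}}g_{\bf v}$ with $c_{\bf v}\in\mathbb {NP}$ and $g,g_{\bf v}\in\mathbb {NP}[x_{1;t},\cdots,x_{n;t}]$ not divisible by any $x_{j;t}$; since all coefficients are nonnegative, the minimal exponent of each $x_{j;t}$ in the sum is the minimum over the terms, and the fact that the left side is a polynomial forces $-D_{t_0}^{t}{\bf v}-{\bf a}\in\mathbb N^n$ for \emph{every} ${\bf v}$, with no splitting into $P+Q$ and no divisibility by the $g_j$ ever invoked. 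The conclusion then comes from the single coordinate $k$ with $a_k>0$ and $x_{k;t}\notin X_{t_0}$ (your $i_0$, which you introduce but never use): {\bf d}-vector-positivity of $X_{t_0}$ forces the $k$-th components $d_{kj}$ of the vectors ${\bf d}^t(x_{j;t_0})$ to be nonnegative (a negative entry would give $x_{j;t_0}=x_{k;t}$, contradicting $x_{k;t}\notin X_{t_0}$), and then $d_{k1}v_1+\cdots+d_{kn}v_n\leq -a_k<0$ forces some $v_j<0$, i.e.\ every ${\bf v}$ is proper. If you replace your $g_{j'}$-adic bookkeeping by this coordinatewise comparison of minimal exponents, the rest of your outline goes through.
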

\begin{proof}
Let $X_t$ be a cluster of $\mathcal A(M_T)$, and ${\bf x}_t^{\bf a}=\prod\limits_{i=1}^{n}x_{i;t}^{a_i}$ be a cluster monomial in $CM(t)\setminus CM(t_0)$, i.e.,  there exists $a_k>0$ such that $x_{k;t}$
is not a cluster variable in $X_{t_0}$. Since $\mathcal A(M_T)$ is a positive GLP algebra, ${\bf x}_t^{\bf a}\in\mathcal L^+(t_0)$ can be expressed as
\begin{eqnarray}
{\bf x}_t^{\bf a}=\sum\limits_{{\bf v}\in V} c_{\bf v}{\bf x}_{t_0}^{\bf v},\nonumber
 \end{eqnarray}
 where $V$ is a finite subset of $\mathbb Z^n$, and $0\neq c_{\bf v}\in\mathbb {NP}, {\bf v}\in V$.
Also because $\mathcal A(M_T)$ is a positive GLP algebra, there exists polynomial $f_1,\cdots,f_n\in \mathbb {NP}[x_{1;t},\cdots,x_{n;t}]$ with $x_{j;t}\nmid f_i$ such that
 $$x_{i;t_0}=\frac{f_i(x_{1;t},\cdots,x_{n;t})}{{\bf x}_t^{{\bf d}_{i;t_0}^{t}}}.$$
Denote by $F^{\bf v}=f_1^{v_1}\cdots f_n^{v_n}$.  Since $v_1,\cdots,v_n\in \mathbb Z$, $F^{\bf v}$ can be written in  the form of $F^{\bf v}=\frac{F_{1;{\bf v}}}{F_{2;{\bf v}}}$, where
$F_{1;{\bf v}}, F_{2;{\bf v}}\in \mathbb {NP}[x_{1;t},\cdots,x_{n;t}]$  with $x_{j;t}\nmid F_{1;{\bf v}}$ and $x_{j;t}\nmid F_{2;{\bf v}}$, $j=1,2,\cdots,n$.
 Thus, $${\bf x}_t^{\bf a}=\sum\limits_{{\bf v}\in V} c_{\bf v}{\bf x}_{t_0}^{\bf v}=
\sum\limits_{{\bf v}\in V} c_{\bf v}{\bf x}_{t}^{-D_{t_0}^{t}\bf v}F^{\bf v}=\sum\limits_{{\bf v}\in V} c_{\bf v}{\bf x}_{t}^{-D_{t_0}^{t}\bf v}\frac{F_{1;{\bf v}}}{F_{2;{\bf v}}}.$$
 From the above equality, we obtain an equality as the following form:
 $$
{\bf x}_t^{\bf a}g(x_{1;t},\cdots,x_{n;t})=\sum\limits_{{\bf v}\in V} c_{\bf v}{\bf x}_{t}^{-D_{t_0}^{t}{\bf v}}g_{\bf v}(x_{1;t},\cdots,x_{n;t}),
$$
which can be written as
$$g(x_{1;t},\cdots,x_{n;t})=\sum\limits_{{\bf v}\in V} c_{\bf v}{\bf x}_{t}^{-D_{t_0}^{t}{\bf v}-{\bf a}}g_{\bf v}(x_{1;t},\cdots,x_{n;t}),$$
where $c_{\bf v}\in\mathbb {NP}$, and $g,~g_{\bf v}\in \mathbb {NP}[x_{1;t},\cdots,x_{n;t}]$ with $x_{j;t}\nmid g$ and $x_{j;t}\nmid g_{\bf v}$, $j=1,2,\cdots,n$.

Thus we obtain $-D_{t_0}^{t}{\bf v}-{\bf a}\in\mathbb N^n$. So the $k$-th component of  $D_{t_0}^{t}{\bf v}+{\bf a}$ satisfying $(d_{k1}^{t_0}v_1+\cdots+d_{kn}^{t_0}v_n)+a_k\leq0$, where $d_{kj}^{t_0}$ is the $k$-th component of ${\bf d}_{j;t_0}^t$, $j=1,2,\cdots,n$.
If $d_{kj}^{t_0}<0$, then ${\bf d}_{j;t_0}^t\notin \mathbb N^n$. Then, since $X_{t_0}$ is a  {\bf d}-vector-positive cluster of $\mathcal A(M_T)$, we obtain $x_{j;t_0}\in X_t$ and more precisely, $x_{j;t_0}=x_{k;t}$. This contradicts $x_{k;t}\notin X_{t_0}$.
So $d_{kj}^{t_0}\geq0$ for $j=1,2,\cdots,n$. Then by $a_k>0$ and $(d_{k1}^{t_0}v_1+\cdots+d_{kn}^{t_0}v_n)+a_k\leq0$, we can obtain ${\bf v}\notin \mathbb N^n, {\bf v}\in V$.
\end{proof}

From  Lemma \ref{mainlemma} and Theorem \ref{properthm}, it follows that
\begin{Theorem}\label{mainthm}
   Each positive and  {\bf d}-vector-positive GLP algebra $\mathcal A(M_T)$ has the proper  Laurent property and thus its cluster monomials are linearly independent.
\end{Theorem}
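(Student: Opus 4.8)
The plan is to obtain this theorem directly by combining the two results that immediately precede it, since the surrounding definitions have been arranged so that the assembly is purely formal; all the genuine work has already been carried out in Lemma \ref{mainlemma}. I would simply unwind the quantifiers in the hypotheses and feed them into the lemma and into Theorem \ref{properthm}.

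First I would record that the hypothesis ``$\mathcal A(M_T)$ is {\bf d}-vector-positive'' means, by Definition \ref{defd}(iii), that \emph{every} cluster $X_{t_0}$ of $\mathcal A(M_T)$ is a {\bf d}-vector-positive cluster. Together with the positivity hypothesis on $\mathcal A(M_T)$, this places us in the exact setting of Lemma \ref{mainlemma} for each possible choice of base cluster $X_{t_0}$.

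Second, applying Lemma \ref{mainlemma} once for each such $t_0$ shows that $\mathcal A(M_T)$ has the proper Laurent monomial property with respect to every cluster $X_{t_0}$ of $M_T$. By part (ii) of Definition \ref{deflaurent}, this is by definition the statement that $\mathcal A(M_T)$ has the proper Laurent monomial property. I would then invoke Theorem \ref{properthm}, which guarantees that any GLP algebra enjoying the proper Laurent monomial property has cluster monomials that are linearly independent over $\mathbb {ZP}$. These two applications deliver the two halves of the conclusion.

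The genuine obstacle is not in this bookkeeping step, which is immediate, but in Lemma \ref{mainlemma} on which it rests: there one must re-expand a cluster monomial ${\bf x}_t^{\bf a}\notin CM(t_0)$ back in the variables of $X_t$, clear denominators using the positivity of both the $X_{t_0}$-expansion and of each $x_{i;t_0}$ in terms of $X_t$, and extract the key membership $-D_{t_0}^t{\bf v}-{\bf a}\in\mathbb N^n$. The {\bf d}-vector-positivity of $X_{t_0}$ is exactly what forbids the offending negative entries $d_{kj}^{t_0}<0$ (which would force $x_{j;t_0}=x_{k;t}\in X_{t_0}$, contradicting ${\bf x}_t^{\bf a}\notin CM(t_0)$) and thereby forces each exponent vector ${\bf v}$ to satisfy ${\bf v}\notin\mathbb N^n$, i.e.\ to be a proper Laurent monomial.
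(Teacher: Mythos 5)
Your proposal is correct and matches the paper exactly: the paper likewise derives Theorem \ref{mainthm} immediately by combining Lemma \ref{mainlemma} (applied to every cluster, using Definition \ref{defd}(iii) and Definition \ref{deflaurent}(ii)) with Theorem \ref{properthm}. Your closing summary of where the real work lies (inside Lemma \ref{mainlemma}) is also an accurate reading of the paper's argument.
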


\section{ Conjecture \ref{conjd} for skew-symmetric cluster algebras}

In this section, we  give an affirmative answer to  Conjecture \ref{conjd} for any skew-symmetric cluster algebras (Theorem \ref{thmdgood}).  This means Theorem \ref{mainthm} is always satisfied by such cluster algebras.

Let $\mathcal A(M)$ be a cluster algebra and $z$ be its cluster variable. Denote by $I(z)$  the set of clusters $X_{t_0}$ of $\mathcal A(M)$ such that $z$ is a cluster variable in $X_{t_0}$. For two vertices $t_1,t_2$ of $\mathbb T_n$, let $l(t_1,t_2)$ be the distance between $t_1$ and $t_2$ in the $n$-regular tree $\mathbb T_n$.  For a cluster $X_t$ of $\mathcal A(M)$, define the distance  between $z$ and $X_t$ as $dist(z,X_t):=min\{l(t_0,t):t_0\in I(z)\}$.

The following theorem is from \cite{LS}:
 \begin{Theorem}(Theorem 4.1, Proposition 5.1 and 5.5 in \cite{LS})\label{Lee}

 Let $\mathcal A(M_{\mathbb T_n})$ be a skew-symmetric cluster algebra of geometric
type with a cluster $X_t$. Let $z$ be a cluster variable of $\mathcal A(M)$, and $X_{t_0}$ be a cluster containing $z$ such that $dist(z,X_t)=l(t_0,t)$. Let $\sigma$ be the unique
sequence in $\mathbb T_n$ relating the seeds at $t_0$ and $t$ in which $p,q$ denote the last two directions (clearly, $p\neq q$):
$$\sigma:\;\; t_0^{~\underline{  \quad k_1\quad   }}~ t_1^{~\underline{\quad k_2 \quad}}  ~\cdots ~t_{m-2}^{~\underline{\quad p=k_{m-1} \quad}}~t_{m-1}=u ^{~\underline{~\quad q=k_{m} \quad}}~ t_m=t.$$
For $e\neq p,q$, let $u^{~\underline{  \quad e \quad   }}~ v$ and $t^{~\underline{  \quad e \quad   }}~ w$ be the edges of $\mathbb T_n$ in the same direction $e$. Then,

(i) $z$ can be written as $z=P_t+Q_t$, with $P_t\in L_1:=\mathbb {NP}[x_{q;u},x_{p;t}^{\pm1};(x_{i;t}^{\pm1})_{i\neq p,q}]$, and $Q_t\in L_2:=\mathbb {NP}[x_{q;t},x_{p;t}^{\pm1};(x_{i;t}^{\pm1})_{i\neq p,q}]$.  Moreover, $P_t$ and $Q_t$ are unique up to $L_1\cap L_2$.

(ii)  There exists a Laurent monomial $F$ appearing in the expansion $z=P_t+Q_t$  such that the variable $x_{e;t},$  has nonnegative exponent in $F$.

(iii)   There exist $P_{1;t}\in L_3,P_{2;t}\in L_4,Q_{1;t}\in L_5,Q_{2;t}\in L_6$ such that  $P_{1;t}+P_{2;t}=P_t$ and $Q_{1;t}+Q_{2;t}=Q_t$, where
\begin{eqnarray}
 L_3&:=&\mathbb {NP}[x_{q;u},x_{e;v};(x_{i;t}^{\pm1})_{i\neq p,q}],~~~~L_4:=\mathbb {NP}[x_{q;u},x_{e;u};(x_{i;t}^{\pm1})_{i\neq p,q}],\nonumber\\
L_5&:=&\mathbb {NP}[x_{q;t},x_{e;t};(x_{i;t}^{\pm1})_{i\neq p,q}],~~~~L_6:=\mathbb {NP}[x_{q;t},x_{e;w};(x_{i;t}^{\pm1})_{i\neq p,q}].\nonumber
\end{eqnarray}
Thus $z$ has the form of $z=P_{1;t}+P_{2;t}+Q_{1;t}+Q_{2;t}$, where $P_{1;t},P_{2;t},Q_{1;t},Q_{2;t}$ are unique up to $L_3\cap L_4\cap L_5\cap L_6$.
 \end{Theorem}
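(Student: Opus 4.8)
The plan is to derive all three parts from the positivity of cluster variables (Theorem \ref{laurent}(ii)) applied simultaneously in several adjacent clusters, combined with the two-term exchange relation (\ref{eq1}) linking a variable to its neighbour under a single mutation. Since $t=\mu_q(u)$, the clusters $X_t$ and $X_u$ agree in every direction except $q$, and the exchange relation yields $x_{q;u}\,x_{q;t}=M_++M_-$, where $M_\pm$ are monomials in $\{x_{i;t}:i\neq q\}$. In particular $x_{q;u}=(M_++M_-)/x_{q;t}\in\mathcal L^+(t)$, so both $L_1\subseteq\mathcal L^+(t)$ and $L_2\subseteq\mathcal L^+(t)$; this is what makes a splitting of $z$ into one piece expressed through $x_{q;u}$ and one through $x_{q;t}$ conceivable at all.

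For part (i) I would take the positive expansion of $z$ in $X_t$, namely $z=\sum_{\bf v}c_{\bf v}{\bf x}_t^{\bf v}$ with $c_{\bf v}\in\mathbb{NP}$, and split the sum according to the sign of the exponent $v_q$ of $x_{q;t}$. The terms with $v_q\geq 0$ assemble into an element $Q_t$ that manifestly lies in $L_2$ and is positive. Setting $P_t:=z-Q_t$, which a priori carries only negative powers of $x_{q;t}$, it then remains to prove that $P_t\in L_1$, i.e. that $P_t$ re-expresses as a positive Laurent polynomial in which $x_{q;u}$ occurs only to nonnegative powers. Because $x_{q;u}^{-1}=x_{q;t}/(M_++M_-)$, this is exactly the divisibility statement that, after clearing denominators, the numerator of $P_t$ is divisible by the appropriate power of $M_++M_-$. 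The asserted uniqueness ``up to $L_1\cap L_2$'' is then the mild observation that $L_1\cap L_2$ consists of the elements free of both $x_{q;u}$ and $x_{q;t}$, and any two such splittings differ by an element of this intersection.

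The main obstacle is precisely this positivity/divisibility claim: that the negative-$x_{q;t}$-power part of $z$ regroups into a genuinely positive polynomial in $x_{q;u}$. A difference of positive Laurent polynomials need not be positive, so this cannot be read off formally and is the combinatorial heart of the Lee--Schiffler argument. I would attack it by expanding $z$ positively also in $X_u$, writing $z=\sum_{\bf w}d_{\bf w}{\bf x}_u^{\bf w}$ with $d_{\bf w}\in\mathbb{NP}$: the part with $w_q\geq 0$ lands in $L_1$ and is a natural candidate for $P_t$. The crux is to verify that this candidate coincides with the complement $z-Q_t$ produced from the $X_t$-expansion; matching the two expansions term by term (tracking how the factors $M_\pm$ combine) is what forces the required divisibility, and is where the explicit combinatorial formula for $z$ genuinely enters.

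For part (iii) I would simply iterate the splitting of part (i) in the transverse direction $e$: applying the same sign-of-exponent decomposition to $P_t$ relative to the mutation $u\to v=\mu_e(u)$ produces $P_{1;t}\in L_3$ and $P_{2;t}\in L_4$, and applying it to $Q_t$ relative to $t\to w=\mu_e(t)$ produces $Q_{1;t}\in L_5$ and $Q_{2;t}\in L_6$, with uniqueness up to $L_3\cap L_4\cap L_5\cap L_6$ inherited from the uniqueness in (i). Part (ii) I would obtain from positivity in $X_w=\mu_e(X_t)$: expanding $z$ positively there and converting back to $X_t$ through $x_{e;t}\,x_{e;w}=N_++N_-$ shows that, unless $z$ lay in a ring in which $x_{e;t}$ occurs only negatively, some resulting monomial must carry a nonnegative power of $x_{e;t}$; ruling out the exceptional case (using $e\neq p,q$ and the minimality $dist(z,X_t)=l(t_0,t)$) is again a combinatorial input rather than a formal consequence, so I expect it to rely on the same control over the expansion of $z$ as in the proof of (i).
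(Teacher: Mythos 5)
This statement is not proved in the paper at all: it is quoted verbatim from Lee--Schiffler (Theorem 4.1, Propositions 5.1 and 5.5 of \cite{LS}) and used as a black box in Section 4, so there is no internal proof to compare yours against. Judged on its own terms, your proposal is a reasonable reading of \emph{what the theorem says} and of where the difficulty sits, but it is not a proof: at every critical juncture you explicitly defer to ``the combinatorial heart of the Lee--Schiffler argument'' or ``the explicit combinatorial formula for $z$,'' and those are precisely the contents of the theorem. Concretely, in part (i) the step you flag --- that the negative-$x_{q;t}$-power part of the positive $X_t$-expansion, after substituting $x_{q;t}^{-1}=x_{q;u}/(M_++M_-)$, regroups into an $\mathbb{NP}$-polynomial in $x_{q;u}$ with the denominators $(M_++M_-)^{k}$ cancelling --- is the theorem; nothing in your sketch supplies it. Your proposed repair (match the positive expansions in $X_t$ and in $X_u$ term by term) also does not work as stated: the terms with $v_q=0$ would be assigned to $Q_t$ by the first splitting and to the candidate $P_t$ by the second, so the two candidates cannot literally coincide, and beyond this bookkeeping the substitution $x_{q;u}=(M_++M_-)/x_{q;t}$ scrambles monomials so that no termwise correspondence exists.

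A further structural point: in \cite{LS} the statements (i)--(iii) are not consequences of positivity but are proved \emph{simultaneously with} positivity, by induction on $dist(z,X_t)$, with the base case resting on the explicit rank-2 expansion formulas (compatible pairs on Dyck paths). So a plan that takes Theorem \ref{laurent}(ii) as input and tries to derive the finer decomposition formally is running the logic backwards relative to the source, and --- as you yourself observe for part (ii), where the exceptional case must be excluded using $e\neq p,q$ and the minimality of $l(t_0,t)$ --- the positivity of the ambient Laurent expansions is simply not enough information to pin down the four-fold splitting of (iii) or the existence of the monomial $F$ in (ii). Since this paper imports the result rather than reproving it, the appropriate treatment here is a citation, not a proof sketch; if you do want a self-contained argument you would need to reproduce the Lee--Schiffler induction, including the rank-2 formulas.
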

\begin{Lemma}\label{lempq} Keep the notations in Theorem \ref{Lee}.
There exists a Laurent monomial in Laurent expansion of $z$ with respect to the seed at $t$  in which  $x_{e;t}$ appears with non-negative exponent for $e\neq p,q$.
\end{Lemma}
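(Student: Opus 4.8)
The statement looks almost identical to Theorem~\ref{Lee}(ii), so the plan is to pin down exactly what extra work is needed and supply it. Theorem~\ref{Lee}(ii) already produces a Laurent monomial $F$ occurring in the representation $z=P_t+Q_t$ in which $x_{e;t}$ has nonnegative exponent; the only gap is that $F$ may live in $P_t$, and $P_t$ is written in $L_1=\mathbb{NP}[x_{q;u},x_{p;t}^{\pm1};(x_{i;t}^{\pm1})_{i\neq p,q}]$, a ring using the variable $x_{q;u}$ rather than the seed-$t$ variables. Thus $F$ need not yet be a monomial of the genuine Laurent expansion of $z$ at $t$. First I would record the two facts that close this gap: (1) since $u$ and $t$ are joined by mutation in direction $q$, one has $x_{j;u}=x_{j;t}$ for all $j\neq q$, while the exchange relation writes $x_{q;u}=h/x_{q;t}$ with $h\in\mathbb{NP}[x_{j;t}:j\neq q]$ a binomial (the factor $(1\oplus y_q)^{-1}\in\mathbb P$ being harmless) whose two monomials carry $x_{e;t}$ with exponents $[b^t_{eq}]_+$ and $[-b^t_{eq}]_+$; (2) by Theorem~\ref{laurent}(ii) the entire computation takes place with coefficients in $\mathbb{NP}$.

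Next I would pass from the representation $z=P_t+Q_t$ to the honest Laurent expansion of $z$ in $x_{1;t},\dots,x_{n;t}$ simply by substituting $x_{q;u}=h/x_{q;t}$ into $P_t$; the summand $Q_t\in L_2$ is already written in seed-$t$ variables. The key observation is that this substitution can never lower the exponent of $x_{e;t}$. Indeed, a monomial of $P_t$ has the form $c\,x_{q;u}^{a}x_{p;t}^{b}\prod_{i\neq p,q}x_{i;t}^{c_i}$ with $a\ge 0$, because $x_{q;u}$ enters $L_1$ polynomially; replacing $x_{q;u}^{a}$ by $h^{a}/x_{q;t}^{a}$ only multiplies by $h^{a}$, every one of whose monomials has $x_{e;t}$-exponent $\ge 0$ since $e\neq q$, while the denominator $x_{q;t}^{a}$ does not involve $x_{e;t}$. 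Applying this to the monomial $F$ from Theorem~\ref{Lee}(ii), each monomial produced from $F$ after substitution still has nonnegative $x_{e;t}$-exponent; and if $F$ already lies in $Q_t$ or in $L_1\cap L_2$ there is nothing to substitute and it is unchanged.

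The one genuine point to verify is that such a monomial actually survives in the reduced expansion, i.e.\ is not cancelled. This is where fact (2) enters: after the substitution $P_t$ becomes a Laurent polynomial with coefficients in $\mathbb{NP}$, and $Q_t$ already has coefficients in $\mathbb{NP}$, so $z$ is exhibited as a sum of seed-$t$ Laurent monomials all carrying positive coefficients. No cancellation can occur, hence this sum is the genuine Laurent expansion of $z$ at $t$, and every monomial appearing in the substituted $P_t$ or in $Q_t$ appears in it. In particular the nonnegative-$x_{e;t}$-exponent monomial coming from $F$ appears there, which is precisely the assertion. I expect the main, indeed the only, obstacle to be this no-cancellation step: without positivity one could not exclude that the monomial produced from $F$ is killed by a term of opposite sign, and it is exactly Theorem~\ref{laurent}(ii), guaranteeing $\mathbb{NP}$-coefficients throughout the Lee--Schiffler decomposition, that removes this danger.
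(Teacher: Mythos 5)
Your proposal is correct and takes essentially the same route as the paper: invoke Theorem \ref{Lee}(ii) to get the monomial $F$ in the decomposition $z=P_t+Q_t$, substitute the exchange relation $x_{q;u}=h/x_{q;t}$ (whose numerator involves $x_{e;t}$ only with nonnegative exponents and whose denominator $x_{q;t}$ does not involve $x_{e;t}$), and conclude that the resulting seed-$t$ Laurent monomials coming from $F$ retain nonnegative $x_{e;t}$-exponent. The only difference is that you make explicit the no-cancellation step via the $\mathbb{NP}$-positivity of all coefficients, which the paper compresses into ``it is easy to see''; this is a genuine improvement in rigor but not a different argument.
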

\begin{proof}
By Theorem \ref{Lee} (ii), there exists a Laurent monomial $F$ appearing in the expansion $z=P_t+Q_t$  such that the variable $x_{e;t}$  has non-negative exponent in $F$.
We know $$x_{q;u}=\frac{y_{q;t}\prod\limits_{j=1}^nx_{j;t}^{[b_{jq}^t]_+}+ \prod\limits_{j=1}^nx_{j;t}^{[-b_{jk}^t]_+}}{({1\bigoplus y_{q;t}})x_{q;t}}.$$
Substituting the above equality into $z=P_t+Q_t$, then we obtain the Laurent expansion of $z$ with respect to $X_t$. It is easy to see that in this Laurent expansion there exists a Laurent monomial such that  $x_{e;t}$ appear in this Laurent monomial with nonnegative exponent.
\end{proof}

Now, we give the main result in this section.
 \begin{Theorem}\label{thmdgood}
Skew-symmetric cluster algebras are  {\bf d}-vector-positive.
 \end{Theorem}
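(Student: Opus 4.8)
The plan is to fix a cluster variable $z$ and a cluster $X_t$ with $z\notin X_t$, and to prove directly that $\mathbf{d}^t(z)\in\mathbb N^n$, i.e. that every component $d^t_e(z)$ is nonnegative. First I would reduce to the geometric case: denominator vectors are insensitive to the choice of coefficient semifield, so by the separation-of-additions formula recalled in Remark \ref{remklaurent} it suffices to treat skew-symmetric cluster algebras of geometric type, which is exactly the setting in which Theorem \ref{Lee} and Lemma \ref{lempq} are available. I would then choose a cluster $X_{t_0}\ni z$ realizing $dist(z,X_t)=l(t_0,t)=:m$; this produces the geodesic $\sigma$ of Theorem \ref{Lee} together with its last two (necessarily distinct) directions $p,q$.

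The key reformulation is that $d^t_e(z)\ge 0$ is equivalent to the assertion that $x_{e;t}$ does not divide $z$ in $\mathcal L(t)$, equivalently that the Laurent expansion of $z$ with respect to $X_t$ contains at least one monomial in which $x_{e;t}$ does not occur in the denominator. I would therefore split the $n$ directions into the \emph{transverse} directions $e\neq p,q$ and the two \emph{active} directions $p,q$. For the transverse directions the conclusion is immediate from the tools just assembled: the refined splitting $z=P_{1;t}+P_{2;t}+Q_{1;t}+Q_{2;t}$ of Theorem \ref{Lee}(iii) localizes the dependence of $z$ on $x_{e;t}$, and Lemma \ref{lempq} extracts from the resulting $X_t$-expansion the monomial that witnesses $d^t_e(z)\ge0$ for each $e\neq p,q$. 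Thus the entire remaining content concerns the two active directions.

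For the active directions I would argue by induction on the distance $m$. The base case $m=1$ is a single mutation $X_t=\mu_{k_1}(X_{t_0})$ with $z=x_{k_1;t_0}$; the mutation rule (\ref{eq1}) writes $z$ as a polynomial in the $x_{j;t}$ divided by the single variable $x_{k_1;t}$, whence $\mathbf{d}^t(z)=\mathbf{e}_{k_1}\in\mathbb N^n$ outright, and here $p,q$ do not arise. For $m\ge2$ I would set $X_t=\mu_q(X_u)$ with $u=t_{m-1}$; since $z\notin X_u$ and $dist(z,X_u)<m$, the induction hypothesis gives $\mathbf{d}^u(z)\in\mathbb N^n$. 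Substituting the exchange relation $x_{q;u}=\frac{y_{q;t}\prod_j x_{j;t}^{[b_{jq}^t]_+}+\prod_j x_{j;t}^{[-b_{jq}^t]_+}}{(1\oplus y_{q;t})\,x_{q;t}}$ into the expression $z=P_t+Q_t$ of Theorem \ref{Lee}(i) turns the positive powers of $x_{q;u}$ occurring in $P_t$ into negative powers of $x_{q;t}$, forcing the minimal $x_{q;t}$-exponent of $z$ to be $\le0$ and hence $d^t_q(z)\ge0$; the remaining direction $p$ is controlled by the same substitution together with the $x_{p;t}^{\pm1}$-structure of the rings $L_1,L_2$.

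The hard part will be exactly these two active directions $p,q$, and in particular the degenerate situations in which the substitution above fails to manufacture a witnessing monomial: for instance when $P_t=0$, so that $z=Q_t$ is genuinely polynomial in $x_{q;t}$, or when cancellation — permitted by the non-uniqueness of $P_t,Q_t$ up to $L_1\cap L_2$ in Theorem \ref{Lee}(i) — wipes out the monomial of minimal $x_{q;t}$- or $x_{p;t}$-degree. Ruling these out amounts to showing that a distinct cluster variable $x_{p;t}$ or $x_{q;t}$ cannot divide $z$; I expect to settle this by passing to the rank-two sub-pattern determined by the last two directions $p,q$, where positivity of $\mathbf{d}$-vectors is classical (Theorem 6.1 of \cite{FZ}), and by invoking the uniqueness clauses of Theorem \ref{Lee} to guarantee that the certifying monomials survive. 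Checking the distance bookkeeping $z\notin X_u$ and $dist(z,X_u)<m$ is routine but must be carried out to keep the induction well-founded.
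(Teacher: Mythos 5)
Your overall architecture (reduction to geometric type, choice of a distance-realizing cluster $X_{t_0}$, induction on the distance, reliance on the Lee--Schiffler decomposition of Theorem \ref{Lee} and on Lemma \ref{lempq}) matches the paper's, but there is a sign error in your ``key reformulation'' that invalidates the step you call immediate. The condition $d^t_e(z)\ge 0$ says that the \emph{minimal} exponent of $x_{e;t}$ in the $X_t$-expansion of $z$ is $\le 0$, i.e.\ that some Laurent monomial carries $x_{e;t}$ with \emph{nonpositive} exponent; you state it as the existence of a monomial in which $x_{e;t}$ ``does not occur in the denominator'', i.e.\ with \emph{nonnegative} exponent. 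Lemma \ref{lempq} produces a monomial of the latter kind, so it does not by itself witness $d^t_e(z)\ge0$ for the transverse directions $e\ne p,q$: a hypothetical expansion all of whose monomials have $x_{e;t}$-exponent in $\{1,2\}$ satisfies the lemma yet has $d^t_e(z)=-1$. The paper applies Lemma \ref{lempq} one step back along the path: if the $k_{m+1}$-component of ${\bf d}^{t'}(z)$ were negative, every monomial of the $X_{t'}$-expansion would carry $x_{k_{m+1};t'}$ with positive exponent, and substituting the exchange relation (with positivity excluding cancellation) would force every monomial of the $X_t$-expansion to carry $x_{k_{m+1};t}$ with negative exponent, contradicting the lemma. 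That one-mutation shift, converting ``all positive'' into ``all negative'', is exactly what your direct application misses.

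Two further ingredients of the paper's proof are absent from your plan and are needed for the induction to close. First, Proposition 1.5 of \cite{RS} (the initial-seed recursion for ${\bf d}$-vectors) shows that ${\bf d}^{t}(z)$ and ${\bf d}^{t'}(z)$ agree in every component except $k_{m+1}$, so each inductive step only has to control the single component in the direction of the last mutation; without this you must control all $n$ components at the target vertex, which your transverse argument cannot do for the reason above. Second, in the genuinely hard case $k_{m+1}=p$, your appeal to ``the rank-two sub-pattern, where positivity of ${\bf d}$-vectors is classical'' does not suffice: $z$ itself is not a cluster variable of that rank-two pattern. The paper instead isolates the maximal $\{p,q\}$-tail of the path ending at a vertex $t_r$, proves (Lemma \ref{lempositive}), via Theorem 6.6 of \cite{FZ}, the exchange-graph property that clusters sharing $n-1$ variables are at most one mutation apart, and careful distance estimates, that the six auxiliary cluster variables occurring in the decomposition of Theorem \ref{Lee}(iii) at $t_{r+1}$ all have nonnegative $p$-th ${\bf d}$-vector component with respect to $X_{t'}$, and only then expresses $z$ as a positive polynomial in those variables to conclude. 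Your sketch names the right difficulties but does not contain these arguments.
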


\begin{proof}
From the exchange relation of $D$-matrices (see (7.7) of \cite{FZ3}), we know the notion of {\bf d}-vectors is independent of the choice of coefficient system. Hence, we can assume that  $\mathcal A(M_{\mathbb T_n})$ is a skew-symmetric cluster algebra of geometric type.

Let $X_{t^{\prime}}$ be a cluster of $\mathcal A(M_{\mathbb T_n})$, $z$ be a cluster
variable with $z\notin X_{t^{\prime}}$, then $dist(z,X_{t^{\prime}})=:m+1>0$. We will show the {\bf d}-vector ${\bf d}^{t^{\prime}}(z)$ of $z$ with respect to $X_{t^{\prime}}$ is in $\mathbb N^n$.

Choose $X_{t_0}\in I(z)$ such that $l(t_0,t^{\prime})=dist(z,X_{t^{\prime}})=m+1>0$.  Since $\mathbb T_n$ is connected as a tree, there is  the unique sequence $\sigma$ linking $t_0$ and $t^{\prime}$ in $\mathbb T_n$:
$$t_0^{~\underline{  \quad k_1\quad   }}~ t_1^{~\underline{\quad k_2 \quad}}  ~\cdots ~t_{m-2}^{~\underline{\quad p=k_{m-1} \quad}}~t_{m-1}=u ^{~\underline{~\quad q=k_{m} \quad}}~ t_m=t~^{~\underline{\quad k_{m+1}(\neq q) \quad}}~t_{m+1}=t^{\prime}.$$
By the choice of $X_{t_0}$, we know $dist(z,X_{t_j})=l(t_0,t_j)=j$ for $j=1,2,\cdots,m+1$.

We prove  ${\bf d}^{t^{\prime}}(z)\in\mathbb N^n$ by induction on $dist(z,X_{t^{\prime}})=m+1>0$.

Clearly,  ${\bf d}^{t^{\prime}}(z)\in\mathbb N^n$ if $dist(z,X_{t^{\prime}})=1,2$.

As {\em Inductive Assumption}, we assume that ${\bf d}^{t^{\star}}(z)\in\mathbb N^n$ for any $t^{\star}$ such that $1\leq dist(z,X_{t^{\star}})<dist(z, X_{t^{\prime}})$.

Because $1\leq dist(z,X_t)<dist(z, X_{t^{\prime}})$, it means ${\bf d}^{t}(z)\in\mathbb N^n$. Since  $X_{t^{\prime}}=\mu_{k_{m+1}}(X_t)$, Proposition 1.5 of \cite{RS} says that the $i$-th component of ${\bf d}^{t}(z)$ and the $i$-th component of ${\bf d}^{t^{\prime}}(z)$ are equal for $i\neq k_{m+1}$. Thus, to show ${\bf d}^{t^{\prime}}(z)\in\mathbb N^n$, we only need to show that the $k_{m+1}$-th component of ${\bf d}^{t^{\prime}}(z)$ is nonnegative.
We prove this in two cases,that is, Case (I): $k_{m+1}\neq p$ and Case (II): $k_{m+1}=p$.

The proof of Case (I):

The Laurent expansion of $z$ with respect to $X_{t^{\prime}}$ has the form of $z=\sum\limits_{{\bf v}\in V}c_{\bf v}{\bf x}_{t^{\prime}}^{\bf v}$, where $V$ is a subset of $\mathbb Z^n$, $0\neq c_{\bf v}\in \mathbb {NP}$. If $k_{m+1}$-th component of ${\bf d}^{t^{\prime}}(z)$ is negative ($k_{m+1}\neq p,q$), then  the exponent of $x_{k_{m+1};t^{\prime}}$ in each ${\bf x}_{t^{\prime}}^{\bf v}$ must be positive, ${\bf v}\in V$. We know $$x_{k_{m+1};t^{\prime}}=\frac{y_{k_{m+1};t^{\prime}}\prod\limits_{j=1}^nx_{j;t^{\prime}}^{[b_{jk_{m+1}}^{t^{\prime}}]_+}+ \prod\limits_{j=1}^nx_{j;t^{\prime}}^{[-b_{jk_{m+1}}^{t^{\prime}}]_+}}{({1\bigoplus y_{k_{m+1};t^{\prime}}})x_{k_{m+1};t}}.$$
Substituting the above equation into $z=\sum\limits_{{\bf v}\in V}c_{\bf v}{\bf x}_{t^{\prime}}^{\bf v}$, we can obtain the Laurent expansion of $z$ with respect to $X_t$. And the exponent of $x_{k_{m+1};t}$ in each Laurent monomial appearing in the obtained  Laurent expansion is negative. This contradicts Lemma \ref{lempq}.
Thus if $k_{m+1}\neq p,q$, then $k_{m+1}$-th component of ${\bf d}^{t^{\prime}}(z)$ is nonnegative and we have  ${\bf d}^{t^{\prime}}(z)$ is in $\mathbb N^n$.

The preparation for the proof of Case (II):

Consider the maximal rank two mutation subsequence in directions $p,q$ at the end of $\sigma$. This sequence connects $t^{\prime}$ to a vertex $t_r$.  Thus we have the following two casesㄩ
\begin{eqnarray}\label{eqnseq1}
t_0\cdots \cdot~t_{r-1}^{~\underline{  \quad k_r(\neq q)\quad   }}~ t_r^{~\underline{\quad p \quad}}~t_{r+1}^{~\underline{\quad q \quad}}~t_{r+2}^{~\underline{\quad p \quad}} ~\cdots ~t_{m-2}^{~\underline{\quad p \quad}}~t_{m-1}^{~\underline{\quad q \quad}}~t_m^{~\underline{~\quad p \quad}}~t_{m+1}=t^{\prime}.
\end{eqnarray}
 or
\begin{eqnarray}\label{eqnseq2}
t_0\cdots \cdot~t_{r-1}^{~\underline{  \quad k_r(\neq p)\quad   }}~ t_r^{~\underline{\quad q \quad}}~t_{r+1}^{~\underline{\quad p \quad}}~t_{r+2}^{~\underline{\quad q \quad}}~\cdots ~t_{m-2}^{~\underline{\quad p \quad}}~t_{m-1}^{~\underline{\quad q \quad}}~t_m^{~\underline{~\quad p \quad}}~t_{m+1}=t^{\prime}.
\end{eqnarray}

Let $t_r^{~\underline{\quad q \quad}}~t_{a}$  in the first case and let $t_r^{~\underline{\quad p \quad}}~t_{b}$  in the second case. It is easy to see that $r\leq m-2$ in the first case and $r\leq m-3$ in the second case.

\begin{Lemma}\label{lempositive}
(i) In the first case $t_r^{~\underline{\quad q \quad}}~t_{a}$,  the $p$-th components of {\bf d}-vectors of $x_{p;t_a},x_{q;t_a},x_{p;t_r},x_{p;t_{r+1}}$, $x_{q;t_{r+1}},x_{q;t_{r+2}}$ with respect to $X_{t^{\prime}}$
are nonnegative.

(ii) In the second case $t_r^{~\underline{\quad p \quad}}~t_{b}$,  the $p$-th components of {\bf d}-vectors of $x_{p;t_b},x_{q;t_b},x_{q;t_r},x_{p;t_{r+1}},$ $x_{q;t_{r+1}}, x_{p;t_{r+2}}$ with respect to $X_{t^{\prime}}$
are nonnegative.
\end{Lemma}
\begin{proof}
(i) If the $p$-th components of  {\em {\bf d}-vector} ${\bf d}^{t^{\prime}}(x)$ of some cluster variable $x\in\{x_{p;t_a},x_{q;t_a},x_{p;t_r},$ $x_{p;t_{r+1}},x_{q;t_{r+1}},x_{q;t_{r+2}}\}$  with respect to $X_{t^{\prime}}$ is negative. Say $x\in X_{t_E}, E\in\{a,r,r+1,r+2\}$.
Since the clusters  $X_{t_a},X_{t_r},X_{t_{r+1}}$, $X_{t_{r+2}}$  can be obtained from the cluster $X_{t^{\prime}}$  by  sequences of mutations using only  two directions $p$ and $q$. Then by Theorem 6.6 in \cite{FZ}, we know $x=x_{p;t^{\prime}}$. Then, $X_{t_E}$ and $X_{t^{\prime}}$ have at least $n-1$ common cluster variables. By Theorem 5 of \cite{GSV} or Theorem 4.23 (c) of \cite{CL}, we know $X_{t^{\prime}}=X_{t_E}$ or $X_{t^{\prime}}=\mu_p(X_{t_E})$ or $X_{t^{\prime}}=\mu_q(X_{t_E})$ as sets.
Thus
\begin{eqnarray}\label{eqnless}
m+1=dist(z,X_{t^{\prime}})\leq max\{dist(z,X_{t_E}),dist(z,\mu_p(X_{t_E}),dist(z,\mu_q(X_{t_E})\}.
\end{eqnarray}

 On the other hand, by the sequence (\ref{eqnseq1}) and $t_r^{~\underline{\quad q \quad}}~t_{a}$, we know $$max\{dist(z,X_{t_E}),dist(z,\mu_p(X_{t_E}),dist(z,\mu_q(X_{t_E})\}\leq r+3\leq m+1.$$
Thus we obtain $max\{dist(z,X_{t_E}),dist(z,\mu_p(X_{t_E}),dist(z,\mu_q(X_{t_E})\}=m+1$, and this will result in that $r=m-2$ and $E=m$, i.e. $X_{t_E}=X_{t_m}$. But this  contradicts that $x_{p;t^{\prime}}=x\in X_{t_E}=X_{t_m}$. So  the $p$-th components of {\bf d}-vectors of $x_{p;t_a},x_{q;t_a},x_{p;t_r},x_{p;t_{r+1}}$, $x_{q;t_{r+1}},x_{q;t_{r+2}}$ with respect to $X_t^{\prime}$ are nonnegative.

(ii) By the same argument with (i), we have
\begin{eqnarray}\label{mmmm}
m+1=dist(z,X_{t^{\prime}})\leq max\{dist(z,X_{t_E}),dist(z,\mu_p(X_{t_E}),dist(z,\mu_q(X_{t_E})\},
\end{eqnarray}
 and, on the other hand,
$$max\{dist(z,X_{t_E}),dist(z,\mu_p(X_{t_E}),dist(z,\mu_q(X_{t_E})\}\leq r+3\leq m,$$
 which contradicts to (\ref{mmmm}). So, the $p$-th components of {\bf d}-vectors of $x_{p;t_b},x_{q;t_b},x_{q;t_r},x_{p;t_{r+1}}$, $x_{q;t_{r+1}}, x_{p;t_{r+2}}$ with respect to $X_{t^{\prime}}$ are nonnegative.
\end{proof}

{\em Return to the proof of Theorem \ref{thmdgood}.}
The proof of case (II):

 Applying Theorem \ref{Lee} (iii) at the vertex $t_{r+1}$ with respect to the directions $q$ (in case of (\ref{eqnseq1})) and $p$ (in case of (\ref{eqnseq2})), we get that either
$$z\in\mathbb{NP}[x_{p;t_a},x_{q;t_a},x_{p;t_r},x_{p;t_{r+1}},x_{q;t_{r+1}},x_{q;t_{r+2}};(x_{i;t_{r+1}}^{\pm1})_{i\neq p,q}],$$
or
$$z\in\mathbb{NP}[x_{p;t_b},x_{q;t_b},x_{q;t_r},x_{p;t_{r+1}},x_{q;t_{r+1}},x_{p;t_{r+2}};(x_{i;t_{r+1}}^{\pm1})_{i\neq p,q}].$$
Then by Lemma \ref{lempositive},  the $p$-th component of ${\bf d}^{t^{\prime}}(z)$ is nonnegative and so,  ${\bf d}^{t^{\prime}}(z)$ is in $\mathbb N^n$.
\end{proof}

By Remark \ref{remklaurent} and Theorem \ref{thmdgood},  skew-symmetric cluster algebras are positive and {\bf d}-vector-positive GLP algebras. Then by Theorem \ref{mainthm} and \ref{properthm}, a new proof of the following result is indeed given, which was proved in \cite{CKLP} in the quite different method.
\begin{Corollary}(\cite{CKLP})
Any skew-symmetric cluster algebra $\mathcal A(M)$ has the  proper  Laurent property, and thus its cluster monomials are linearly independent.
\end{Corollary}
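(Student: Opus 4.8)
The plan is to obtain this Corollary as a direct assembly of two results already proved in the paper, namely Theorem \ref{mainthm} and Theorem \ref{thmdgood}, together with the general linear-independence criterion of Theorem \ref{properthm}. The strategy is therefore not to prove anything substantially new, but to verify that a skew-symmetric cluster algebra $\mathcal A(M)$ satisfies the two hypotheses of Theorem \ref{mainthm}---that it is a \emph{positive} GLP algebra and that it is \emph{{\bf d}-vector-positive}---and then to read off both conclusions from that theorem.

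I would carry this out in three quick steps. First, I would note that every cluster algebra is a GLP algebra (as observed just after Definition \ref{GLPA}), so that $\mathcal A(M)$ carries the GLP structure to which Theorem \ref{mainthm} applies. Second, I would establish positivity: a skew-symmetric cluster algebra is in particular skew-symmetrizable, so by Theorem \ref{laurent}(ii) combined with the separation-of-additions formula, as summarized in Remark \ref{remklaurent}, every cluster variable expands as a Laurent polynomial with coefficients in $\mathbb{NP}$ with respect to any cluster; this is exactly the inclusion $X_{t'}\subseteq\mathcal L^+(t)$ of Definition \ref{GLPA}(iii), so $\mathcal A(M)$ is a positive GLP algebra. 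Third, {\bf d}-vector-positivity is precisely the content of Theorem \ref{thmdgood}. Having checked both hypotheses, Theorem \ref{mainthm} yields that $\mathcal A(M)$ has the proper Laurent monomial property, and the linear independence of cluster monomials over $\mathbb{ZP}$ then follows, either as the second conclusion of Theorem \ref{mainthm} directly or by feeding the proper Laurent monomial property into Theorem \ref{properthm}.

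There is no genuinely hard step inside the Corollary itself; it is a bookkeeping citation chain, and all the substantive difficulty has already been absorbed into the antecedent theorems---above all into Theorem \ref{thmdgood}, whose proof rests on the fine structural decomposition of Lee--Schiffler (Theorem \ref{Lee}) and on the delicate distance-based induction over the exchange sequence linking $t_0$ to $t'$. If I had to isolate the only point requiring care, it is the matching of hypotheses: one must check that ``skew-symmetric'' indeed implies ``skew-symmetrizable'' so that Remark \ref{remklaurent} applies, and that the $\mathbb{NP}$-positivity furnished there is literally the positivity demanded by Definition \ref{GLPA}(iii). Both identifications are routine, so the entire argument collapses to invoking Theorem \ref{mainthm} for the algebra $\mathcal A(M)$ once these two boxes are ticked.
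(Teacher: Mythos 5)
Your proposal is correct and follows exactly the paper's own route: the paper derives the Corollary by combining Remark \ref{remklaurent} (positivity of skew-symmetrizable, hence skew-symmetric, cluster algebras) with Theorem \ref{thmdgood} ({\bf d}-vector-positivity), and then invoking Theorem \ref{mainthm} and Theorem \ref{properthm}. No differences worth noting.
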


\section{ On Conjecture \ref{conjg}  }\label{conj2section}
In this section, we introduce the definition of {\bf g}-vectors for a class of GLP algebras (pointed GLP algebras), which generalizes the  {\bf g}-vectors defined for cluster algebras with principal coefficients (see \cite{FZ3}). Then we will discuss Conjecture \ref{conjg} for GLP algebras.

Denote by $\mathcal L_{p}(t)=\mathbb Z[y_1,\cdots,y_m][x_{1;t}^{\pm1},\cdots,x_{n;t}^{\pm1}],\;\;~\mathcal L_{p}^+(t)=\mathbb Z_{\geq0}[y_1,\cdots,y_m][x_{1;t}^{\pm1},\cdots,x_{n;t}^{\pm1}],$ and $$\mathbb Q_{sf}(t):=\{\frac{f}{g}| g\neq0; f,g\in \mathbb Z_{\geq0}[y_1,\cdots,y_m; x_{1;t},\cdots,x_{n;t}]\}$$
where $x_{1,t},\cdots,x_{n;t},y_1,\cdots,y_m$ are algebraically independent over $\mathbb{Z}$ for any nonnegative integer $m$.

\begin{Definition}\label{defprincipal}
Let $\mathcal A(M_T)$ be a GLP algebra.

 (i) $\mathcal A(M_T)$ is said to be of {\bf  geometric type} if $X_{t^{\prime}}\subseteq\mathcal L_{p}(t)$ for any $t,t^{\prime}\in T$.

 (ii) $\mathcal A(M_T)$ is said to be  {\bf pointed at $t_0\in T$} (or say, {\bf at cluster $X_{t_0}$}) if it is of geometric type and there exists a vertex $t_0\in T$ such that any cluster variable $x_{i;t}$ of $M_T$ has the form:
\begin{equation}\label{gvectorglp}
x_{i;t}={\bf x}_{t_0}^{{\bf g}_{i;t}^{t_0}}(1+\sum\limits_{0\neq {\bf v}\in\mathbb N^m,~~{\bf u}\in\mathbb Z^n}c_{\bf v}{\bf y}^{\bf v}{\bf x}_{t_0}^{\bf u}),\;\;\;\;\; \text{with}\;\;\;\;\; c_{\bf v}\in\mathbb Z.
\end{equation}
where  the ${\bf g}_{i;t}^{t_0}$ is called the {\bf {\bf g}-vector} of $x_{i;t}$ with respect to $X_{t_0}$ and $G_t^{t_0}=({\bf g}_{1;t}^{t_0},\cdots,{\bf g}_{n;t}^{t_0})\in M_n(\mathbb Z)$ is called the {\bf G-matrix} of $X_t$ with respect to $X_{t_0}$; the {\bf {\bf g}-vector} of a cluster monomial ${\bf x}_t^{\bf a}$ with respect to $X_{t_0}$ is defined to be $G_t^{t_0}{\bf a}$.
\end{Definition}

\begin{Remark}
The notion of {\em ``pointed"} in cluster theory has been used in \cite{LLZ,Q1,Q2}. One can refer Definition 2.2.1 in \cite{Q2} for detail.
\end{Remark}

The definition of {\bf g}-vectors for  pointed GLP algebras is inspired by the following theorem.

\begin{Theorem} (\cite{GHKK})\label{thmghkk}
Let $\mathcal A(M)$ be a skew-symmetrizable cluster algebra with principal coefficients at $t_0$. Then any cluster variable $x_{i;t}$ of  $\mathcal A(M)$ has the form of
$$x_{i;t}={\bf x}_{t_0}^{{\bf g}_{i;t}^{t_0}}(1+\sum\limits_{0\neq {\bf v}\in\mathbb N^n}c_{\bf v}{\bf y}^{\bf v}{\bf x}_{t_0}^{B_{t_0}{\bf v}}),$$
where ${\bf g}_{i;t}^{t_0}$ is the g-vector of $x_{i;t}$, ${\bf y}^{\bf v}=\prod\limits_{j=1}^ny_i^{v_i}$, and $c_{\bf v}\geq 0$.
\end{Theorem}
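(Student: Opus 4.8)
The plan is to split the statement into two logically independent pieces and attack them with entirely different tools: the \emph{shape} of the expansion (the factorization by $\mathbf x_{t_0}^{\mathbf g_{i;t}^{t_0}}$, the appearance of $B_{t_0}\mathbf v$ in the exponents, and the fact that the coefficients are integers) is a formal consequence of gradings and the separation formula, whereas the \emph{positivity} $c_{\mathbf v}\ge 0$ is the genuinely deep input.

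First I would install the $\mathbb Z^n$-grading on the ambient algebra of the principal-coefficient pattern by declaring $\deg(x_{j;t_0})=\mathbf e_j$ and $\deg(y_j)=-\mathbf b_j$, where $\mathbf b_j=B_{t_0}\mathbf e_j$ is the $j$-th column of $B_{t_0}$. A direct check on the exchange relation (\ref{eq1}) shows each mutation is homogeneous for this grading; hence, by induction on the distance from $t_0$ in $\mathbb T_n$, every cluster variable $x_{i;t}$ is homogeneous, and its degree is by definition the $g$-vector $\mathbf g_{i;t}^{t_0}$. This simultaneously makes $\mathbf g_{i;t}^{t_0}$ well defined and ties the $\mathbf x_{t_0}$-exponents of each monomial of $x_{i;t}$ to its $y$-exponents. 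Next I would invoke Fomin--Zelevinsky's separation-of-additions formula (Theorem 3.7 of \cite{FZ3}): writing $\hat y_j=y_j\,\mathbf x_{t_0}^{\mathbf b_j}$, one obtains $x_{i;t}=\mathbf x_{t_0}^{\mathbf g_{i;t}^{t_0}}\,F_{i;t}(\hat y_1,\dots,\hat y_n)$, where $F_{i;t}$ is the $F$-polynomial of $x_{i;t}$, which by \cite{FZ3} is a genuine polynomial in $y_1,\dots,y_n$ with constant term $1$. Since $\hat{\mathbf y}^{\mathbf v}=\mathbf y^{\mathbf v}\mathbf x_{t_0}^{B_{t_0}\mathbf v}$, expanding gives exactly
$$x_{i;t}=\mathbf x_{t_0}^{\mathbf g_{i;t}^{t_0}}\Bigl(1+\sum_{0\neq\mathbf v\in\mathbb N^n}c_{\mathbf v}\,\mathbf y^{\mathbf v}\mathbf x_{t_0}^{B_{t_0}\mathbf v}\Bigr),$$
now with $c_{\mathbf v}\in\mathbb Z$. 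Up to this point everything is elementary and purely combinatorial.

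The hard part will be the positivity $c_{\mathbf v}\ge 0$, which is equivalent to positivity of the $F$-polynomial coefficients, and for which I would not expect any elementary argument. The plan is to import the scattering-diagram technology of \cite{GHKK}: one realizes each cluster variable as a theta function on the consistent scattering diagram attached to $B_{t_0}$ and expresses every coefficient $c_{\mathbf v}$ as a sum over broken lines with a prescribed asymptotic direction. Because every wall-crossing function in a consistent scattering diagram has nonnegative coefficients, each broken line contributes a nonnegative monomial, whence $c_{\mathbf v}\ge 0$. This nonnegativity is precisely the content that the statement cites from \cite{GHKK}; in the skew-symmetric subcase one could instead substitute the combinatorial Laurent-positivity argument of Lee--Schiffler or a categorification via Euler characteristics of quiver Grassmannians, but the scattering-diagram route is the one covering the full skew-symmetrizable generality asserted here. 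Thus the entire weight of the theorem sits in establishing this single nonnegativity; the structural factorization itself is formal, being a consequence only of the grading and the separation formula.
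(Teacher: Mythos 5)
Your outline is correct, but note that the paper itself offers no proof of this statement: Theorem \ref{thmghkk} is quoted verbatim as an external result of \cite{GHKK}, so there is nothing internal to compare against. Your reconstruction matches how the result is actually established in the literature: the $\mathbb Z^n$-grading with $\deg(x_{j;t_0})={\bf e}_j$, $\deg(y_j)=-{\bf b}_j$ and the separation formula of \cite{FZ3} reduce the statement to properties of the $F$-polynomial $F_{i;t}$, and the substitution $\hat y_j=y_j{\bf x}_{t_0}^{{\bf b}_j}$ accounts for the exponents $B_{t_0}{\bf v}$. One attribution quibble: the fact that $F_{i;t}$ has constant term $1$ (which is what lets you write the factor as $1+\sum_{{\bf v}\neq 0}c_{\bf v}{\bf y}^{\bf v}{\bf x}_{t_0}^{B_{t_0}{\bf v}}$) is \emph{not} proved in \cite{FZ3} --- it is Conjecture 5.4 there, settled by \cite{DWZ} in the skew-symmetric case and only by \cite{GHKK} in the full skew-symmetrizable generality. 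So both the constant-term-$1$ statement and the nonnegativity $c_{\bf v}\ge 0$ are part of the deep input you defer to scattering diagrams, not just the latter. With that correction your division of labor (formal graded/separation argument versus theta-function positivity) is exactly the right one, and the scattering-diagram route is indeed the only one currently covering the skew-symmetrizable case asserted here.
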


  A GLP algebra $\mathcal A(M_T)$ pointed at $t_0$  is called  {\bf positive} if $X_{t^{\prime}}\subseteq\mathcal L_{p}^+(t)$ for any $t,t^{\prime}\in T$ and  {\bf weakly positive} if $X_{t^{\prime}}\subseteq\mathbb Q_{sf}(t)$ for any $t,t^{\prime}\in T$. Obviously, positivity means weak positivity since $\mathcal L_{p}^+(t)\subseteq\mathbb Q_{sf}(t)$. From the definition of mutation of cluster variables, it is easy to see that cluster algebras are always weakly positive.

\begin{Theorem}\label{detgthm}
Let $\mathcal A(M_T)$ be a positive GLP algebra pointed at $t_0$, and $X_t, X_{t^{\prime}}$ be any two clusters of $\mathcal A(M_T)$, then

(i) $x_{j;t^{\prime}}$ has the following form $$x_{j;t^{\prime}}={\bf x}_{t}^{{\bf r}_{j;t}^{t^{\prime}}}(1+\sum\limits_{0\neq {\bf v}\in\mathbb N^m,~~{\bf u}\in\mathbb Z^n}c_{\bf v}{\bf y}^{\bf v}{\bf x}_t^{\bf u}),\;\;\;\; \text{with}\;\;\;\; c_{\bf v}\geq 0.$$

(ii) $G_{t^{\prime}}^{t_0}=G_t^{t_0}R_{t}^{t^{\prime}}$, where $ R_{t}^{t^{\prime}}=({\bf r}_{1;t}^{t^{\prime}},\cdots,{\bf r}_{n;t}^{t^{\prime}})$ . In particular, $det G_t^{t_0}=\pm1$.

\end{Theorem}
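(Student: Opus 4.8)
The plan is to use the specialization that kills the coefficients $y_1,\dots,y_m$, under which every cluster variable collapses to a single monomial, and to combine this with the positivity of the $X_t$-expansion to exclude cancellation. First I would fix the two clusters $X_t,X_{t'}$ and a single cluster variable $x_{j;t'}$. Let $\pi$ denote the $\mathbb Z$-algebra homomorphism on $\mathbb Z[y_1,\dots,y_m][x_{1;t_0}^{\pm1},\dots,x_{n;t_0}^{\pm1}]$ sending each $y_k\mapsto 0$. Since the algebra is pointed at $t_0$, the expansion (\ref{gvectorglp}) shows that $\pi(x_{i;t})={\bf x}_{t_0}^{{\bf g}_{i;t}^{t_0}}$ and $\pi(x_{j;t'})={\bf x}_{t_0}^{{\bf g}_{j;t'}^{t_0}}$, each a single Laurent monomial. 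Because every $\pi(x_{i;t})$ is a monomial and hence a unit, $\pi$ extends to the localization of this ring at the multiplicative set generated by $x_{1;t},\dots,x_{n;t}$, so it may be applied to Laurent expansions in $X_t$. Positivity gives $x_{j;t'}=\sum_{\bf u}P_{\bf u}(y){\bf x}_t^{\bf u}$ with $P_{\bf u}\in\mathbb Z_{\ge0}[y_1,\dots,y_m]$, and applying $\pi$ yields $\pi(x_{j;t'})=\sum_{\bf u}P_{\bf u}(0){\bf x}_{t_0}^{G_t^{t_0}{\bf u}}$.

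Equating the two descriptions gives $\sum_{\bf u}P_{\bf u}(0){\bf x}_{t_0}^{G_t^{t_0}{\bf u}}={\bf x}_{t_0}^{{\bf g}_{j;t'}^{t_0}}$. The key point is that all coefficients $P_{\bf u}(0)$ are nonnegative integers, so grouping terms by the exponent $G_t^{t_0}{\bf u}$ can only add coefficients and never cancel them; as the right-hand side is a single monomial of coefficient $1$, there is a unique ${\bf u}^{*}$ with $P_{{\bf u}^{*}}(0)=1$ and $G_t^{t_0}{\bf u}^{*}={\bf g}_{j;t'}^{t_0}$, while $P_{\bf u}(0)=0$ for every other ${\bf u}$. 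Hence the $y$-free part of the $X_t$-expansion of $x_{j;t'}$ is the single monomial ${\bf x}_t^{{\bf u}^{*}}$; setting ${\bf r}_{j;t}^{t'}:={\bf u}^{*}$ and factoring it out puts $x_{j;t'}$ in the shape required by (i), the remaining coefficients staying nonnegative since the $P_{\bf u}$ are. Recording the exponent of this surviving monomial simultaneously gives $G_t^{t_0}{\bf r}_{j;t}^{t'}={\bf g}_{j;t'}^{t_0}$, which is exactly the $j$-th column of $G_{t'}^{t_0}=G_t^{t_0}R_t^{t'}$; letting $j$ range over $1,\dots,n$ proves (i) and the matrix identity in (ii).

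For the determinant statement I would specialize the identity of (ii) to the case $t'=t_0$. Since $x_{i;t_0}={\bf x}_{t_0}^{e_i}$ forces $G_{t_0}^{t_0}=I_n$, the relation $G_{t_0}^{t_0}=G_t^{t_0}R_t^{t_0}$ becomes $G_t^{t_0}R_t^{t_0}=I_n$. Both factors are integer matrices, so $\det G_t^{t_0}\cdot\det R_t^{t_0}=1$ with both determinants in $\mathbb Z$, whence $\det G_t^{t_0}=\pm1$.

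The main obstacle is not a single computation but the logical structure: a naive proof of (i) seems to require the invertibility of $G_t^{t_0}$ (so that distinct ${\bf u}$ give distinct monomials ${\bf x}_{t_0}^{G_t^{t_0}{\bf u}}$), yet invertibility is precisely the last assertion. The nonnegativity afforded by positivity is what breaks this circularity: even were the map ${\bf u}\mapsto G_t^{t_0}{\bf u}$ non-injective, nonnegative coefficients cannot cancel, so a single-monomial total still forces a unique surviving exponent. The only technical care is the extension of $\pi$ to the localization, which is needed to legitimately apply $\pi$ to the negative powers of the $x_{i;t}$ appearing in the $X_t$-expansion; this is harmless because each $\pi(x_{i;t})$ is a monomial, hence a unit, in $\mathbb Z[x_{1;t_0}^{\pm1},\dots,x_{n;t_0}^{\pm1}]$.
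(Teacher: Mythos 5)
Your proposal is correct and follows essentially the same route as the paper: specialize $y_1=\cdots=y_m=0$, use nonnegativity of the coefficients in the $X_t$-expansion to rule out cancellation so that exactly one monomial survives and matches ${\bf x}_{t_0}^{{\bf g}_{j;t'}^{t_0}}$, then read off the identity $G_{t'}^{t_0}=G_t^{t_0}R_t^{t'}$ and set $t'=t_0$ to get $\det G_t^{t_0}=\pm1$. Your explicit remarks on extending the specialization to the Laurent localization and on why non-injectivity of ${\bf u}\mapsto G_t^{t_0}{\bf u}$ is harmless are careful touches the paper leaves implicit, but the argument is the same.
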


\begin{proof}
By the assumption, we know
 $x_{j;t^{\prime}}\in\mathcal L_{p}^+(t)$. So, $x_{j;t^{\prime}}$ has the form of
$$x_{j;t^{\prime}}=\sum\limits_{p\in I}\lambda_{p,j}{\bf x}_{t}^{{\bf r}_{p,j;t}^{t_0}}+\sum\limits_{0\neq {\bf v}\in\mathbb N^m,~~{\bf u}\in\mathbb Z^n}c_{\bf v}{\bf y}^{\bf v}{\bf x}_t^{\bf u},~~\lambda_{p,j},c_{\bf v}\geq0.$$
By viewing $x_{j;t^{\prime}}$ as an element in $\mathcal L_{p}(t_0)$, we have
\begin{eqnarray}\label{coreqa}
{\bf x}_{t_0}^{{\bf g}_{j;t^{\prime}}^{t_0}}=x_{j;t^{\prime}}|_{y_1=\cdots=y_m=0}=\sum\limits_{p\in I}\lambda_{p,j}{\bf x}_{t}^{{\bf r}_{p,j;t}^{t_0}}|_{y_1=\cdots=y_m=0}=
\sum\limits_{p\in I}\lambda_{p,j}{\bf x}_{t_0}^{G_t^{t_0}{\bf r}_{p,j;t}^{t_0}}, \lambda_{p,j}\geq 0.
\end{eqnarray}
We get that there is only one $\lambda_{p,j}$ such that $\lambda_{p,j}=1$, all the other $\lambda_{p,j}=0$, so $x_{j;t^{\prime}}$ has the form of
$$x_{j;t^{\prime}}={\bf x}_{t}^{{\bf r}_{j;t}^{t_0}}+\sum\limits_{0\neq {\bf v}\in\mathbb N^m,~~{\bf u}\in\mathbb Z^n}c_{\bf v}{\bf y}^{\bf v}{\bf x}_t^{\bf u},~~\lambda_{j},c_{\bf v}\geq 0,$$
i.e., $x_{i;t^{\prime}}$ has the following form $x_{j;t^{\prime}}={\bf x}_{t}^{{\bf r}_{j;t}^{t^{\prime}}}(1+\sum\limits_{0\neq {\bf v}\in\mathbb N^m,~~{\bf u}\in\mathbb Z^n}c_{\bf v}{\bf y}^{\bf v}{\bf x}_t^{\bf u}),~~c_{\bf v}\geq 0.$

Thus the equality (\ref{coreqa}) is just ${\bf x}_{t_0}^{{\bf g}_{j;t^{\prime}}^{t_0}}=x_{j;t^{\prime}}|_{y_1=\cdots=y_m=0}=
{\bf x}_{t_0}^{G_t^{t_0}{\bf r}_{j;t}^{t_0}}.$
So, we obtain $G_{t^{\prime}}^{t_0}=G_t^{t_0}R_{t}^{t^{\prime}}$.
Take $t^{\prime}=t_0$, we have $G_t^{t_0}R_{t}^{t_0}=G_{t_0}^{t_0}=I_n$. Because $G_t^{t_0},R_{t}^{t_0}\in M_n(\mathbb Z)$, it holds $det G_t^{t_0}=\pm1$.
\end{proof}

\begin{Remark}
By the above theorem, and the definition of  GLP algebra pointed at some cluster, we see that if a positive GLP algebra $\mathcal A(M_T)$ is pointed at $t_0$, then $\mathcal A(M_T)$ is pointed at any $t\in T$.
So, the ${\bf r}_{i;t}^{t^{\prime}}$ and $R_{t}^{t^{\prime}}$ appearing in the proof in this theorem can be re-written as ${\bf r}_{i;t}^{t^{\prime}}={\bf g}_{i;t}^{t^{\prime}}$, $R_{t}^{t^{\prime}}=G_{t}^{t^{\prime}}$.
\end{Remark}

\begin{Theorem}\label{thmmonomial}
Let $\mathcal A(M_T)$ be a  positive GLP algebra pointed  at $t_0$, and ${\bf x}_{t_1}^{\bf a}, {\bf x}_{t_2}^{\bf d}$ be  two cluster monomials of $\mathcal A(M_T)$. If ${\bf x}_{t_1}^{\bf a}$ and ${\bf x}_{t_2}^{\bf d}$ have the same {\bf g}-vector, i.e., $G_{t_1}^{t_0}{\bf a}=G_{t_2}^{t_0}{\bf d}$, then ${\bf x}_{t_1}^{\bf a}={\bf x}_{t_2}^{\bf d}$.

\end{Theorem}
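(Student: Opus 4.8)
The plan is to express both cluster monomials relative to a single cluster, namely $X_{t_1}$ (and symmetrically $X_{t_2}$), and to exploit the pointed structure together with positivity. First I would record a \emph{leading-term property} for cluster monomials: if ${\bf x}_t^{\bf a}$ is a cluster monomial (so ${\bf a}\in\mathbb N^n$), then multiplying the pointed expansions of the individual cluster variables $x_{i;t}$ supplied by Theorem \ref{detgthm}(i) yields
$${\bf x}_t^{\bf a}={\bf x}_{t_1}^{G_{t}^{t_1}{\bf a}}\Big(1+\sum_{0\neq{\bf v}\in\mathbb N^m,\,{\bf u}\in\mathbb Z^n}c_{\bf v}{\bf y}^{\bf v}{\bf x}_{t_1}^{\bf u}\Big),\qquad c_{\bf v}\geq 0.$$
Here the constant term is $1$ because each factor has constant term $1$, and all coefficients are nonnegative because $\mathcal A(M_T)$ is positive; I use the Remark after Theorem \ref{detgthm} that $\mathcal A(M_T)$ is pointed at every cluster, so Theorem \ref{detgthm}(i) applies with base $X_{t_1}$. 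The exponent $G_t^{t_1}{\bf a}$ is exactly the ${\bf g}$-vector of ${\bf x}_t^{\bf a}$ relative to $X_{t_1}$, and every monomial in the sum has strictly positive total $y$-degree.

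Next I would match the ${\bf g}$-vectors after this change of base cluster. By Theorem \ref{detgthm}(ii) together with the following Remark we have $G_{t_2}^{t_0}=G_{t_1}^{t_0}\,G_{t_2}^{t_1}$, where $G_{t_2}^{t_1}$ denotes the $G$-matrix of $X_{t_2}$ with respect to $X_{t_1}$. Consequently
$$G_{t_1}^{t_0}\big(G_{t_2}^{t_1}{\bf d}\big)=G_{t_2}^{t_0}{\bf d}=G_{t_1}^{t_0}{\bf a},$$
the last equality being precisely the hypothesis. Since $\det G_{t_1}^{t_0}=\pm1$ by Theorem \ref{detgthm}(ii), the matrix $G_{t_1}^{t_0}$ is invertible over $\mathbb Z$, and cancelling it gives $G_{t_2}^{t_1}{\bf d}={\bf a}$. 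Interchanging the roles of $t_1$ and $t_2$ yields $G_{t_1}^{t_2}{\bf a}={\bf d}$ in the same way. Substituting these into the leading-term property produces the two identities
$${\bf x}_{t_2}^{\bf d}={\bf x}_{t_1}^{\bf a}(1+A),\qquad {\bf x}_{t_1}^{\bf a}={\bf x}_{t_2}^{\bf d}(1+B),$$
where $A\in\mathcal L_p^+(t_1)$ and $B\in\mathcal L_p^+(t_2)$ each have nonnegative coefficients and each involve only strictly positive total degree in the $y$-variables.

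Finally I would close by a positivity and cancellation step. Substituting the second identity into the first gives ${\bf x}_{t_1}^{\bf a}={\bf x}_{t_1}^{\bf a}(1+A)(1+B)$; as the ambient ring is a domain and ${\bf x}_{t_1}^{\bf a}\neq0$, this forces $(1+A)(1+B)=1$, that is $A+B+AB=0$. Now $A$, $B$, and $AB$ all have nonnegative coefficients, so no cancellation can occur when they are added, and the vanishing of their sum forces every coefficient of each to vanish; in particular $A=0$. Hence ${\bf x}_{t_2}^{\bf d}={\bf x}_{t_1}^{\bf a}$, as desired. I expect this last step to be the crux: it is essential that the base-change expansion of a cluster monomial has genuinely \emph{nonnegative} $y$-coefficients (which is exactly where positivity of $\mathcal A(M_T)$ enters, through Theorem \ref{detgthm}(i)) and that these expansions carry no constant $y$-term, for otherwise the implication $A+B+AB=0\Rightarrow A=0$ would fail. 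The remaining ingredients—promoting Theorem \ref{detgthm}(i) from cluster variables to cluster monomials and tracking the ${\bf g}$-vector under the change of base cluster—are routine consequences of Theorem \ref{detgthm}.
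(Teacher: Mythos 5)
Your proposal follows essentially the same route as the paper: expand each cluster monomial pointedly over the other's cluster, match the leading exponents using the invertibility of the $G$-matrices from Theorem \ref{detgthm}, and then kill the two error terms by positivity; your multiplicative bookkeeping $(1+A)(1+B)=1$ versus the paper's additive identity is only a cosmetic difference, and your derivation of ${\bf a}=G_{t_2}^{t_1}{\bf d}$ via the chain rule $G_{t_2}^{t_0}=G_{t_1}^{t_0}G_{t_2}^{t_1}$ is equivalent to the paper's specialization at $y=0$. The one step you state too loosely is the final cancellation. You have $A\in\mathcal L_{p}^+(t_1)$ but $B\in\mathcal L_{p}^+(t_2)$, so the phrase ``$A$, $B$, and $AB$ all have nonnegative coefficients'' refers to two different coordinate systems, and $A+B+AB=0$ is an identity in the ambient field $\mathcal F$, where ``no cancellation'' has no meaning until everything is expressed over a single cluster. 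Rewritten over $X_{t_1}$, $B$ is in general not a Laurent polynomial at all: a monomial ${\bf x}_{t_2}^{\bf u}$ with ${\bf u}\in\mathbb Z^n$ having negative entries becomes, via positivity, only a \emph{quotient} of polynomials with nonnegative coefficients. So you must clear denominators: writing $B=f_1/g_1$ with $f_1,g_1\in\mathbb Z_{\geq0}[y_1,\dots,y_m,x_{1;t_1},\dots,x_{n;t_1}]$ and $g_1\neq0$, the identity becomes $Ag_1+f_1+Af_1=0$, a vanishing sum of Laurent polynomials with nonnegative coefficients over $X_{t_1}$, whence $f_1=0$ (so $B=0$) and $Ag_1=0$ (so $A=0$). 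This is precisely the $f_1/g_1$, $f_2/g_2$ manipulation in the paper's proof; with that elaboration inserted, your argument is complete and correct.
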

\begin{proof}
Since $\mathcal A(M_T)$ is a  positive GLP algebra pointed at $t_0$, by viewing ${\bf x}_{t_1}^{\bf a}, {\bf x}_{t_2}^{\bf d}$ as elements in $\mathcal L(t_0)$, we know ${{\bf x}_{t_1}^{\bf a}|}_{y_1=\cdots=y_m=0}={\bf x}_{t_0}^{G_{t_1}^{t_0}{\bf a}}={\bf x}_{t_0}^{G_{t_2}^{t_0}{\bf d}}={{\bf x}_{t_2}^{\bf d}|}_{y_1=\cdots=y_m=0}$.
By Theorem \ref{detgthm} (i),  the Laurent expansion of the cluster monomial  ${\bf x}_{t_1}^{\bf a}$ with respect to the cluster at $t_2$  has the following form
\begin{equation}\label{equ12}
{\bf x}_{t_1}^{\bf a}={\bf x}_{t_2}^{{\bf b}}(1+\sum\limits_{0\neq {\bf v}\in\mathbb N^m,~~{\bf u}\in\mathbb Z^n}c_{\bf v}{\bf y}^{\bf v}{\bf x}_{t_2}^{\bf u}),\;\;c_{\bf v}\geq 0.
\end{equation}
 We obtain that
\begin{eqnarray}
{\bf x}_{t_0}^{G_{t_1}^{t_0}{\bf a}}={\bf x}_{t_1}^{\bf a}|_{y_1=\cdots y_m=0}&=&({\bf x}_{t_2}^{{\bf b}}(1+\sum\limits_{0\neq {\bf v}\in\mathbb N^m,~~{\bf u}\in\mathbb Z^n}c_{\bf v}{\bf y}^{\bf v}{\bf x}_{t_2}^{\bf u}))|_{y_1=\cdots=y_m=0}\nonumber\\
&=&{\bf x}_{t_0}^{G_{t_2}^{t_0}\bf b}(1+\sum\limits_{0\neq {\bf v}\in\mathbb N^m,~~{\bf u}\in\mathbb Z^n}c_{\bf v}{\bf y}^{\bf v}{\bf x}_{t_0}^{G_{t_2}^{t_0}\bf u})_{y_1=\cdots=y_m=0}\nonumber\\
&=&{\bf x}_{t_0}^{G_{t_2}^{t_0}\bf b}.\nonumber
\end{eqnarray}
Thus $G_{t_2}^{t_0}{\bf b}=G_{t_1}^{t_0}{\bf a}=G_{t_2}^{t_0}{\bf d}$, which implies ${\bf b}={\bf d}$, by Theorem \ref{detgthm} (ii).
 Then by (\ref{equ12}),  ${\bf x}_{t_1}^{\bf a}$ can be written as follows:
\begin{eqnarray}\label{adeqn}
{\bf x}_{t_1}^{\bf a}={\bf x}_{t_2}^{{\bf d}}+\sum\limits_{0\neq {\bf v_1}\in\mathbb N^m,~~{\bf u_1}\in\mathbb Z^n}c_{\bf v_1}{\bf y}^{\bf v_1}{\bf x}_{t_2}^{\bf u_1},\;\;\text{with}\;\; c_{\bf v_1}\geq 0.
\end{eqnarray}
Similarly, the Laurent expansion of the cluster monomial  ${\bf x}_{t_2}^{\bf d}$ with respect to the cluster at $t_1$  has the following form
$${\bf x}_{t_2}^{\bf d}={\bf x}_{t_1}^{{\bf a}}+\sum\limits_{0\neq {\bf v_2}\in\mathbb N^m,~~{\bf u_2}\in\mathbb Z^n}c_{\bf v_2}{\bf y}^{\bf v_2}{\bf x}_{t_1}^{\bf u_2},\;\;\text{with}\;\; c_{\bf v_2}\geq 0.$$
Thus we  obtain
$$\sum\limits_{0\neq {\bf v_1}\in\mathbb N^m,~~{\bf u_1}\in\mathbb Z^n}c_{\bf v_1}{\bf y}^{\bf v_1}{\bf x}_{t_2}^{\bf u_1}+\sum\limits_{0\neq {\bf v_2}\in\mathbb N^m,~~{\bf u_2}\in\mathbb Z^n}c_{\bf v_2}{\bf y}^{\bf v_2}{\bf x}_{t_1}^{\bf u_2}=0.$$
Clearly, $\sum\limits_{0\neq {\bf v_2}\in\mathbb N^m,~~{\bf u_2}\in\mathbb Z^n}c_{\bf v_2}{\bf y}^{\bf v_2}{\bf x}_{t_1}^{\bf u_2}$ can be written as follows:
$$\sum\limits_{0\neq {\bf v_2}\in\mathbb N^m,~~{\bf u_2}\in\mathbb Z^n}c_{\bf v_2}{\bf y}^{\bf v_2}{\bf x}_{t_1}^{\bf u_2}=\frac{f_1}{g_1},$$
 where $f_1,g_1\in\mathbb Z_{\geq 0}[y_1,\cdots,y_m,x_{1;t_1},\cdots,x_{n;t_1}]$.
Since $\mathcal A(M_T)$ is a  positive and pointed GLP algebra,  ${\bf x}_{t_2}^{\bf u_1}$ can be written as a quotient of two polynomials in $\mathbb Z_{\geq 0}[y_1,\cdots,y_m,x_{1;t_1},\cdots,x_{n;t_1}]$. Because $c_{\bf v_1}\geq 0$,  we can write  $\sum\limits_{0\neq {\bf v_1}\in\mathbb N^m,~~{\bf u_1}\in\mathbb Z^n}c_{\bf v_1}{\bf y}^{\bf v_1}{\bf x}_{t_2}^{\bf u_1}$ in the following form:
$$\sum\limits_{0\neq {\bf v_1}\in\mathbb N^m,~~{\bf u_1}\in\mathbb Z^n}c_{\bf v_1}{\bf y}^{\bf v_1}{\bf x}_{t_2}^{\bf u_1}=\frac{f_2}{g_2},$$
where $f_2,g_2\in\mathbb Z_{\geq0}[y_1,\cdots,y_m,x_{1;t_1},\cdots,x_{n;t_1}]$.
Thus
$$\sum\limits_{0\neq {\bf v_1}\in\mathbb N^m,~~{\bf u_1}\in\mathbb Z^n}c_{\bf v_1}{\bf y}^{\bf v_1}{\bf x}_{t_2}^{\bf u_1}+\sum\limits_{0\neq {\bf v_2}\in\mathbb N^m,~~{\bf u_2}\in\mathbb Z^n}c_{\bf v_2}{\bf y}^{\bf v_2}{\bf x}_{t_1}^{\bf u_2}=\frac{f_2}{g_2}+\frac{f_1}{g_1}=0,$$
then we obtain $f_1g_2+f_2g_1=0$, where $f_1,g_1,f_2,g_2\in\mathbb Z_{\geq0}[y_1,\cdots,y_m,x_{1;t_1},\cdots,x_{n;t_1}]$ and $g_1\neq0,g_2\neq0$.
So, we have $f_2=0=f_1$, i.e., $$\sum\limits_{0\neq {\bf v_1}\in\mathbb N^m,~~{\bf u_1}\in\mathbb Z^n}c_{\bf v_1}{\bf y}^{\bf v_1}{\bf x}_{t_2}^{\bf u_1}=0=\sum\limits_{0\neq {\bf v_2}\in\mathbb N^m,~~{\bf u_2}\in\mathbb Z^n}c_{\bf v_2}{\bf y}^{\bf v_2}{\bf x}_{t_1}^{\bf u_2},$$
Then, by (\ref{adeqn}), we obtain that ${\bf x}_{t_1}^{\bf a}={\bf x}_{t_2}^{\bf d}$.
\end{proof}

From  Theorem \ref{thmmonomial} and Theorem \ref{detgthm} (ii),  we obtain the following result as an affirmation of Conjecture \ref{conjg} for a {\em positive} GLP algebra pointed at $t_0$.
\begin{Corollary}\label{corconj}
Let $\mathcal A(M_T)$ be a  GLP algebra pointed at $t_0$. If $\mathcal A(M_T)$ is positive, then

(i)\; Different cluster monomials of $\mathcal A(M_T)$ have different {\bf g}-vectors.

(ii)\; The ${\bf g}$-vectors ${\bf g}_{1;t}^{t_0},\cdots, {\bf g}_{n;t}^{t_0}$ form a $\mathbb Z$-basis of $\mathbb Z^n$.
\end{Corollary}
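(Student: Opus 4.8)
The plan is to deduce both parts directly from the two preceding theorems, since each statement is essentially a reformulation of a conclusion already established for positive pointed GLP algebras.

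For part (i), I would argue by contrapositive. Suppose two cluster monomials ${\bf x}_{t_1}^{\bf a}$ and ${\bf x}_{t_2}^{\bf d}$ of $\mathcal A(M_T)$ carry the same {\bf g}-vector; by the definition of the {\bf g}-vector of a cluster monomial in Definition \ref{defprincipal}, this means $G_{t_1}^{t_0}{\bf a}=G_{t_2}^{t_0}{\bf d}$. Theorem \ref{thmmonomial} then yields ${\bf x}_{t_1}^{\bf a}={\bf x}_{t_2}^{\bf d}$, i.e. the two cluster monomials coincide. Hence distinct cluster monomials must carry distinct {\bf g}-vectors, which is exactly (i).

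For part (ii), I would observe that the vectors ${\bf g}_{1;t}^{t_0},\cdots,{\bf g}_{n;t}^{t_0}$ are by definition the columns of the integer matrix $G_t^{t_0}\in M_n(\mathbb Z)$. A set of $n$ integer vectors forms a $\mathbb Z$-basis of $\mathbb Z^n$ precisely when the matrix whose columns are these vectors lies in $GL_n(\mathbb Z)$, equivalently when its determinant equals $\pm 1$. Theorem \ref{detgthm} (ii) furnishes exactly $\det G_t^{t_0}=\pm 1$, so the columns of $G_t^{t_0}$ give the desired $\mathbb Z$-basis.

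The substance of the argument thus rests entirely on the two cited theorems, and no genuine obstacle arises at this final step. The real work lies upstream. Theorem \ref{thmmonomial} exploits positivity and pointedness: after matching the leading exponents to obtain ${\bf b}={\bf d}$, the two ``tail'' sums are each rewritten as ratios of polynomials with nonnegative coefficients, and the vanishing of their sum forces $f_1g_2+f_2g_1=0$, whence $f_1=f_2=0$. Theorem \ref{detgthm} (ii) uses the factorization $G_{t^{\prime}}^{t_0}=G_t^{t_0}R_t^{t^{\prime}}$ together with the specialization $y_1=\cdots=y_m=0$ to force $G_t^{t_0}R_t^{t_0}=I_n$ and hence unimodularity. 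Once these are in hand, Corollary \ref{corconj} is immediate.
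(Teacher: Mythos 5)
Your proposal is correct and follows exactly the paper's route: the paper derives this corollary immediately from Theorem \ref{thmmonomial} (giving part (i) by contrapositive) and Theorem \ref{detgthm} (ii) (giving part (ii) via $\det G_t^{t_0}=\pm 1$). You have merely spelled out the short deduction that the paper leaves implicit.
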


\begin{Remark} (i) Since skew-symmetrizable cluster algebras with principal coefficients at $t_0$ are always positive GLP algebras pointed at $t_0$ by \cite{GHKK},   Conjecture \ref{conjg} holds for skew-symmetrizable cluster algebras.

 (ii) By the results in \cite{HL}, we know that the acyclic sign-skew-symmetric cluster algebras with principal coefficients are positive and pointed GLP algebras,  and then Conjecture \ref{conjg} holds for acyclic sign-skew-symmetric cluster algebras.

\end{Remark}

   Note that the affirmation of Conjecture \ref{conjg} (ii) for acyclic sign-skew-symmetric cluster algebras was also given by Min Huang and us in \cite{CHL2} via the other approach.

 As mentioned in \cite{CHL}, positivity and sign-coherence are two  deep phenomena in cluster algebras. Sign-coherence can be  equivalently understood  that the corresponding cluster algebras with principal coefficients are pointed. It was known in \cite{NZ} that sign-coherence can be related with many other properties of cluster algebras. Also, we believe that positivity can be used to explain some other properties of cluster algebras.  Through   Theorem \ref{mainthm}, \ref{detgthm}, \ref{thmmonomial} and the methods of their proofs, we attempt to provide some evidences for the essentiality of positivity and sign-coherence.

{\bf Acknowledgements:}\; This project is supported by the National Natural Science Foundation of China (No.11671350 and No.11571173).


\end{document}